\definecolor{darkgreen}{rgb}{0,.5,0}
\numberwithin{equation}{section}
\newtheorem{theorem}{Theorem}[section]
\newtheorem{propo}[theorem]{Proposition}
\newtheorem{corollary}{Corollary}[section]
\newtheorem*{rem*}{Remark}
\newtheorem{remark}{Remark}[section]
\begin{document}
\footnotetext{
\emph{2010 Mathematics Subject Classification.} 42B20; 42B25.

\emph{Key words and phrases.} Characterization, Hardy-Littlewood maximal function, Muckenhoupt $A_{p}$ class, Weighted Morrey spaces, Weighted $\mathrm{BMO}$ space.}

\title[]{Another characterizations of Muckenhoupt $A_{p}$ class}

\author[]{Dinghuai Wang and Jiang Zhou$^\ast$}
\address{College of Mathematics and System Sciences \endgraf
         Xinjiang University \endgraf
         Urumqi 830046 \endgraf
         Republic of China}
\email{Wangdh1990@126.com; zhoujiangshuxue@126.com}
\thanks{The research was supported by National Natural Science Foundation
of China (Grant No.11261055). \\ \qquad * Corresponding author, Email: zhoujiangshuxue@126.com. Telephone:+8613009688600}

\begin{abstract}
This manuscript addresses Muckenhoupt $A_{p}$ weight theory in connection to Morrey and BMO
spaces. It is proved that $\omega$ belongs to Muckenhoupt $A_{p}$ class, if and only if Hardy-Littlewood maximal function $M$ is bounded from weighted Lebesgue spaces $L^{p}(\omega)$ to weighted Morrey spaces $M^{p}_{q}(\omega)$ for $1<q< p<\infty$. As a corollary, if $M$ is (weak) bounded on $M^{p}_{q}(\omega)$, then $\omega\in A_{p}$. The $A_{p}$ condition also characterizes the boundedness of the Riesz transform $R_{j}$ and convolution operators $T_{\epsilon}$ on weighted Morrey spaces. Finally, we show that $\omega\in A_{p}$ if and only if $\omega\in \mathrm{BMO}^{p'}(\omega)$ for $1\leq p< \infty$ and $1/p+1/p'=1$.
\end{abstract}
\maketitle

\maketitle


\vspace{0.3cm}

\section{Introduction}

For $1< p<\infty$ and a nonnegative locally integrable function $\omega$ on $\mathbb{R}^n$, $\omega$ is in the
Muckenhoupt $A_{p}$ class if it satisfies the condition
$$[\omega]_{A_{p}}:=\sup_{Q}\bigg(\frac{1}{|Q|}\int_{Q}\omega(x)dx\bigg)\bigg(\frac{1}{|Q|}\int_{Q}\omega(x)^{-\frac{1}{p-1}}dx\bigg)^{p-1}<\infty.$$
And a weight function $\omega$ belongs to the class $A_{1}$ if there exists $C> 0$ such that for every cube Q,
$$\frac{1}{|Q|}\int_{Q}\omega(x)dx\leq C\mathop\mathrm{ess~inf}_{x\in Q}\omega(x),$$
and the infimum of $C$ is denoted by $[\omega]_{A_{1}}$. A weight $\omega$ is called an $A_{\infty}$ weight if
$$[\omega]_{A_{\infty}}:=\sup_{Q}\bigg(\frac{1}{|Q|}\int_{Q}\omega(x)dx\bigg)\exp\bigg(\frac{1}{|Q|}\int_{Q}\log \omega(x)^{-1}dx\bigg)<\infty.$$
In fact, $A_{\infty}=\bigcup_{1\leq p<\infty}A_{p}$.

Weighted inequalities arise naturally in Fourier analysis, but their use is best justified by the variety of applications in which they appear. For example, the theory of weights plays an important role in the study of boundary value problems for Laplace's equation on Lipschitz domains. Other applications of weighted inequalities include vector-valued inequalities, extrapolation of operators and applications to certain classes of integral equation and nonlinear partial differential equation. There are a number of classical results demonstrate that the Muckenhoupt $A_{p}$ classes are
the right collections of weights to do harmonic analysis on weighted spaces. The main results along these
lines are the equivalence between the $\omega\in A_{p}$ condition and the $L^{p}(\omega)$ boundedness (or weak boundedness) of maximal operator and singular integral operators.

A well known result of Muckenhoupt \cite{M} showed that the Hardy-Littlewood maximal function
$$Mf(x)=\sup_{Q\ni x}\frac{1}{|Q|}\int_{Q}|f(y)|dy$$
is (weak) bounded on weighted Lebesgue spaces $L^{p}(\omega)$ if and only if $\omega\in A_{p}$ for $1<p<\infty$ (for the case $n=1$). Hunt, Muckenhoupt and Wheeden \cite{HMW} proved that the $A_{p}$ condition also characterizes the $L^{p}(\omega)$ boundedness of the Hilbert transform
$$Hf(x)=\frac{1}{\pi} \mathrm{p.v.}\int_{\mathbb{R}}\frac{f(y)}{x-y}dy.$$
Later, Coifman and Fefferman \cite{CF} extended the $A_{p}$ theory to the case $n\geq1$ and general Calder\'on-Zygmund operators, they also proved that $A_{p}$ weights satisfy the crucial reverse H\"{o}lder condition.

In 2009, Komori and Shirai \cite{KS} introduced the weighted Morrey spaces. Let $0<q<p<\infty$, $\omega$ be a weight and $\omega(Q):=\int_{Q}\omega(x)dx$. Then a weighted Morrey space is defined by
$$M^{p}_{q}(\omega)=\bigg\{f\in L^{q}_{loc}(\omega):\|f\|_{M^{p}_{q}(\omega)}:=\sup_{Q}\frac{1}{\omega(Q)^{1/q-1/p}}\bigg(\int_{Q}|f(x)|^{q}\omega(x)dx\bigg)^{1/q}<\infty\bigg\},$$
and a weighted weak Morrey space is defined by
$$WM^{p}_{q}(\omega)=\Big\{f\in L^{q}_{loc}(\omega):\|f\|_{WM^{p}_{q}(\omega)}<\infty\Big\},$$
where
$$\|f\|_{WM^{p}_{q}(\omega)}:=\sup_{Q}\frac{1}{\omega(Q)^{1/q-1/p}}\sup_{\lambda>0}\lambda\Big(\int_{\{x\in Q: |f(x)|>\lambda\}}\omega(x)dx\Big)^{1/q}.$$
\vspace{0.2cm}
They proved that if $\omega\in A_{p}$, then $M$ is bounded on $M^{p}_{q}(\omega)$. An interesting question is raised. Is $\omega$ in $A_{p}$ if $M$ is bounded on $M^{p}_{q}(\omega)$ for $1<q<p<\infty$? We will give an affirmative answer as follows.

\vspace{0.5cm}

\begin{theorem}\label{main1.1}
Let $1<q<p<\infty.$ The following statements are equivalent:
\begin{enumerate}
\item [\rm(1)] $\omega\in A_{p}$;
\item [\rm(2)] $M$ is a bounded operator from $L^{p}(\omega)$ to $L^{p,\infty}(\omega)$;
\item [\rm(3)] $M$ is a bounded operator from $L^{p}(\omega)$ to $M^{p}_{q}(\omega)$;
\item [\rm(4)] $M$ is a bounded operator from $L^{p}(\omega)$ to $WM^{p}_{q}(\omega)$;
\item [\rm(5)] $M$ is a bounded operator from $M_{q}^{p}(\omega)$ to $M_{q}^{p}(\omega)$;
\item [\rm(6)] $M$ is a bounded operator from $M_{q}^{p}(\omega)$ to $WM_{q}^{p}(\omega)$.
\end{enumerate}
\end{theorem}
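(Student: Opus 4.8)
The plan is to establish the cycle
$$(1)\Rightarrow(5)\Rightarrow(3)\Rightarrow(4)\Rightarrow(1)$$
together with the side implications $(5)\Rightarrow(6)\Rightarrow(4)$, which forces all six statements to coincide; the equivalence $(1)\Leftrightarrow(2)$ I would simply quote as the classical Muckenhoupt weak-type characterization \cite{M}, in the form valid for all $n\geq1$ due to Coifman and Fefferman \cite{CF}. The arrow $(1)\Rightarrow(5)$ is exactly the boundedness theorem of Komori and Shirai \cite{KS} recalled in the introduction, so no new work is required there.

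Most of the remaining arrows are soft and rest on two elementary facts. First, Hölder's inequality with exponents $p/q$ and $(p/q)'$ gives, for every cube $Q$,
$$\Big(\int_{Q}|f|^{q}\omega\Big)^{1/q}\leq\Big(\int_{Q}|f|^{p}\omega\Big)^{1/p}\,\omega(Q)^{1/q-1/p},$$
so that $\|f\|_{M^{p}_{q}(\omega)}\leq\|f\|_{L^{p}(\omega)}$ and $L^{p}(\omega)\hookrightarrow M^{p}_{q}(\omega)$ continuously. Composing any operator bound on $M^{p}_{q}(\omega)$ with this embedding converts it into a bound with domain $L^{p}(\omega)$, which yields $(5)\Rightarrow(3)$ and $(6)\Rightarrow(4)$. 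Second, Chebyshev's inequality gives $\|g\|_{WM^{p}_{q}(\omega)}\leq\|g\|_{M^{p}_{q}(\omega)}$, whence $(5)\Rightarrow(6)$ and $(3)\Rightarrow(4)$. Thus $(4)$, the formally weakest Morrey estimate, is implied by each of $(3)$, $(5)$ and $(6)$, and the entire theorem collapses to the single substantive implication $(4)\Rightarrow(1)$.

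For $(4)\Rightarrow(1)$ I would run the classical Muckenhoupt test-function argument, adapted to the weak Morrey norm. Fix a cube $Q$, write $\sigma=\omega^{-1/(p-1)}$ (so that $\sigma^{p-1}\omega=1$), and to sidestep integrability issues set $\sigma_{k}=\min(\sigma,k)$ and $f=\sigma_{k}\chi_{Q}$. For every $x\in Q$ one has $Mf(x)\geq|Q|^{-1}\int_{Q}\sigma_{k}=\sigma_{k}(Q)/|Q|$, so for any $\lambda<\sigma_{k}(Q)/|Q|$ the level set $\{x\in Q:Mf(x)>\lambda\}$ is all of $Q$. Restricting the supremum defining $\|Mf\|_{WM^{p}_{q}(\omega)}$ to this single cube and letting $\lambda\uparrow\sigma_{k}(Q)/|Q|$ produces the lower bound $\frac{\sigma_{k}(Q)}{|Q|}\,\omega(Q)^{1/p}$, in which the exponent $q$ cancels exactly. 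On the other side, $\sigma_{k}^{p}\omega\leq\sigma_{k}\,\sigma^{p-1}\omega=\sigma_{k}$, so $\|f\|_{L^{p}(\omega)}\leq\sigma_{k}(Q)^{1/p}$. Inserting both into the hypothesis $\|Mf\|_{WM^{p}_{q}(\omega)}\leq C\|f\|_{L^{p}(\omega)}$ and dividing by $\sigma_{k}(Q)^{1/p}$ (finite since $\sigma_{k}\leq k$) gives
$$\frac{\omega(Q)}{|Q|}\Big(\frac{\sigma_{k}(Q)}{|Q|}\Big)^{p-1}\leq C^{p}.$$
Letting $k\to\infty$ and invoking monotone convergence yields precisely $[\omega]_{A_{p}}\leq C^{p}$.

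I expect the main obstacle to be bookkeeping rather than anything conceptual: one must (i) justify the truncation and the passage $k\to\infty$, disposing of the degenerate cubes where $\omega(Q)=0$ or $\sigma(Q)=\infty$, and (ii) verify carefully that, when tested on the single cube $Q$, the weak Morrey quantity genuinely reduces to $\lambda\,\omega(Q)^{1/p}$, so that the normalizing factor $\omega(Q)^{1/q-1/p}$ is absorbed and the $q$-dependence disappears. Once these points are settled, the displayed inequality is exactly the $A_{p}$ condition on the arbitrary cube $Q$, completing the circle of implications.
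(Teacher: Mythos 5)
Your proposal is correct, and the core implication $(4)\Rightarrow(1)$ is essentially the paper's argument: the same test function $\omega^{1-p'}\chi_{Q}$ on a single cube, the same observation that the level set $\{Mf>\lambda\}$ fills $Q$ for $\lambda$ below the average, and the same cancellation of the exponent $q$ leaving $\lambda\,\omega(Q)^{1/p}$; your truncation $\sigma_{k}=\min(\sigma,k)$ plays exactly the role of the paper's regularization $(\omega+\epsilon)^{1-p'}$, and both handle the non-integrability issue correctly. Where you genuinely diverge is in how you reach $(3)$: the paper proves $(2)\Rightarrow(3)$ directly via a Kolmogorov-type embedding $L^{p,\infty}(\omega)\hookrightarrow M^{p}_{q}(\omega)$ for $q<p$ (Proposition 2.1 and Corollary 2.1), which the authors explicitly flag as one of their two main contributions, whereas you route through $(1)\Rightarrow(5)\Rightarrow(3)$ using the Komori--Shirai strong bound on $M^{p}_{q}(\omega)$ together with the H\"older embedding $\|f\|_{M^{p}_{q}(\omega)}\leq\|f\|_{L^{p}(\omega)}$. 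Your route is logically sound and more economical for the equivalence itself, but it buys less: the paper's passage through the weak $(p,p)$ bound is what later yields the sharp constant $\|M\|_{L^{p}(\omega)\to M^{p}_{q}(\omega)}\lesssim[\omega]_{A_{p}}^{1/p}$ in their Theorem 2.2, which cannot be recovered from the strong Morrey boundedness of \cite{KS}. The remaining soft arrows ($(5)\Rightarrow(6)$, $(3)\Rightarrow(4)$, $(6)\Rightarrow(4)$ via Chebyshev and the embedding) agree with the paper, and your overall implication graph does make all six statements equivalent.
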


\begin{remark}
It should be point out that the main contribution of this paper in Theorem \ref{main1.1} is $(2)\Rightarrow (3)$ and $(4)\Rightarrow (1)$, and other implications have been showed in \cite{CF} and \cite{KS} or follow from the trivial embedding properties.
\end{remark}

\vspace{0.3cm}

For the case $p=1$, we have

\begin{theorem}\label{main1.2}
Let $0<q<1.$ The following statements are equivalent:
\begin{enumerate}
\item [\rm(1)] $\omega\in A_{1}$;
\item [\rm(2)] $M$ is a bounded operator from $L^{1}(\omega)$ to $L^{1,\infty}(\omega)$;
\item [\rm(3)] $M$ is a bounded operator from $L^{1}(\omega)$ to $M^{1}_{q}(\omega)$;
\item [\rm(4)] $M$ is a bounded operator from $L^{1}(\omega)$ to $WM^{1}_{q}(\omega)$.
\end{enumerate}
\end{theorem}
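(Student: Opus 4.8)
The plan is to prove the cyclic chain of implications $(1)\Rightarrow(2)\Rightarrow(3)\Rightarrow(4)\Rightarrow(1)$, which makes all four statements equivalent. Two links are essentially free: $(1)\Rightarrow(2)$ is the classical weak-type $(1,1)$ characterization of the $A_1$ class, and $(3)\Rightarrow(4)$ is immediate from the trivial one-cube embedding $M^1_q(\omega)\hookrightarrow WM^1_q(\omega)$, obtained by applying Chebyshev's inequality on each cube $Q$ before taking the supremum defining the norms. Hence the genuine work is concentrated in the two remaining links $(2)\Rightarrow(3)$ and $(4)\Rightarrow(1)$, exactly paralleling the contributions isolated in the remark after Theorem \ref{main1.1}.

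For $(2)\Rightarrow(3)$ I would first establish the continuous embedding $L^{1,\infty}(\omega)\hookrightarrow M^1_q(\omega)$, valid for every $0<q<1$, after which $(2)\Rightarrow(3)$ follows by composing this embedding with the assumed weak-type bound for $M$. To prove the embedding, fix a cube $Q$ and use the layer-cake identity $\int_Q|f|^q\omega=q\int_0^\infty\lambda^{q-1}\omega(\{x\in Q:|f(x)|>\lambda\})\,d\lambda$. Since $\omega(\{x\in Q:|f(x)|>\lambda\})\le\min\{\omega(Q),\,\|f\|_{L^{1,\infty}(\omega)}/\lambda\}$, splitting the integral at $\lambda_0=\|f\|_{L^{1,\infty}(\omega)}/\omega(Q)$ and using $q<1$ to make the tail integral converge yields $\int_Q|f|^q\omega\le(1-q)^{-1}\|f\|_{L^{1,\infty}(\omega)}^q\,\omega(Q)^{1-q}$. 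Taking $q$-th roots and dividing by $\omega(Q)^{1/q-1}$ removes all dependence on $Q$, giving $\|f\|_{M^1_q(\omega)}\le(1-q)^{-1/q}\|f\|_{L^{1,\infty}(\omega)}$.

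The decisive step is $(4)\Rightarrow(1)$, which I would prove by testing the assumed bound on indicator functions. Fix a cube $Q$ and an arbitrary measurable set $S\subseteq Q$ and take $f=\chi_S$, so that $\|f\|_{L^1(\omega)}=\omega(S)$. For every $x\in Q$ one has the pointwise lower bound $M\chi_S(x)\ge|Q|^{-1}\int_Q\chi_S=|S|/|Q|=:t_0$. Using $Q$ itself as the test cube in the weak-Morrey norm together with any level $\lambda<t_0$, the superlevel set $\{x\in Q:M\chi_S(x)>\lambda\}$ is all of $Q$; letting $\lambda\uparrow t_0$ produces $\|M\chi_S\|_{WM^1_q(\omega)}\ge\omega(Q)^{1-1/q}\cdot t_0\,\omega(Q)^{1/q}=\frac{|S|}{|Q|}\,\omega(Q)$. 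Combining this with the hypothesis $\|M\chi_S\|_{WM^1_q(\omega)}\le C\|\chi_S\|_{L^1(\omega)}$ gives $\frac{|S|}{|Q|}\,\omega(Q)\le C\,\omega(S)$ for every measurable $S\subseteq Q$.

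Finally I would convert this inequality into the $A_1$ condition. The estimate $\frac{\omega(Q)}{|Q|}|S|\le C\,\omega(S)$, valid for all measurable $S\subseteq Q$, is equivalent to $\frac{\omega(Q)}{|Q|}\le C\,\esinf_{x\in Q}\omega(x)$: integrating the pointwise $A_1$ inequality over $S$ gives one direction, while for the converse one applies the set inequality to $S=\{x\in Q:\omega(x)<\alpha\}$ with $\alpha=C^{-1}\omega(Q)/|Q|$ and observes that a positive-measure choice of such $S$ would contradict the bound, forcing $\omega\ge\alpha$ a.e. on $Q$. Taking the supremum over $Q$ yields $\omega\in A_1$. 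I expect the real obstacle to be pinning down the sharp exponents in the weak-Morrey lower bound of the previous paragraph, so that the powers of $\omega(Q)$ cancel exactly to leave $\frac{|S|}{|Q|}\omega(Q)$; this cancellation is precisely what makes the testing argument reproduce the scale-free $A_1$ inequality rather than a weaker, cube-dependent estimate.
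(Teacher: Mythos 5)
Your proposal is correct and follows essentially the same route as the paper: the chain $(1)\Rightarrow(2)\Rightarrow(3)\Rightarrow(4)\Rightarrow(1)$, with $(2)\Rightarrow(3)$ obtained from the Kolmogorov-type embedding $L^{1,\infty}(\omega)\hookrightarrow M^{1}_{q}(\omega)$ (the paper's Proposition \ref{prop2.1} and Corollary \ref{cor2.1}) and $(4)\Rightarrow(1)$ obtained by testing on indicator functions supported in a cube $Q$. The only cosmetic difference is in the last step: the paper tests on subcubes $Q_{1}\subset Q$ and invokes the Lebesgue differentiation theorem to get $M\omega\leq C\omega$ a.e., whereas you test on arbitrary measurable $S\subseteq Q$ and extract $\frac{1}{|Q|}\int_{Q}\omega\leq C\esinf_{x\in Q}\omega(x)$ directly via the sublevel-set contradiction; both are valid.
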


\vspace{0.3cm}

For the $n$th dimensional case, the $A_{p}$ condition also characterizes the $L^{p}(\omega)$ boundedness of the Riesz transform
$$R_{j}f(x)=c_{n} \mathrm{p.v.}\int_{\mathbb{R}^n}\frac{x_{j}-y_{j}}{|x-y|^{n+1}}f(y)dy, \quad j=1,2,\cdots,n,$$
where $c_{n}=\Gamma(\frac{n+1}{2})/\pi^{\frac{n+1}{2}}$. In this paper, we will show that the $A_{p}$ condition is also necessary for the boundedness of the Riesz transforms on weighted Morrey spaces.
\begin{theorem}\label{main1.3}
Let $1<q<p<\infty.$ The following statements are equivalent:
\begin{enumerate}
\item [\rm(1)] $\omega\in A_{p}$;
\item [\rm(2)] $R_{j}$ is a bounded operator from $L^{p}(\omega)$ to $L^{p}(\omega)$ for all $j=1,2,\cdots,n$;
\item [\rm(3)] $R_{j}$ is a bounded operator from $L^{p}(\omega)$ to $M^{p}_{q}(\omega)$ for all $j=1,2,\cdots,n$;
\item [\rm(4)] $R_{j}$ is a bounded operator from $L^{p}(\omega)$ to $WM^{p}_{q}(\omega)$ for all $j=1,2,\cdots,n$;
\item [\rm(5)] $R_{j}$ is a bounded operator from $M_{q}^{p}(\omega)$ to $M_{q}^{p}(\omega)$ for all $j=1,2,\cdots,n$;
\item [\rm(6)] $R_{j}$ is a bounded operator from $M_{q}^{p}(\omega)$ to $WM_{q}^{p}(\omega)$ for all $j=1,2,\cdots,n$.
\end{enumerate}
\end{theorem}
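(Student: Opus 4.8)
The plan is to establish the full cycle of implications, dispatching the several ``easy'' arrows through embeddings and classical theory and concentrating on the one genuinely new step. First I would record the two structural embeddings $L^{p}(\omega)\hookrightarrow M^{p}_{q}(\omega)\hookrightarrow WM^{p}_{q}(\omega)$ for $1<q<p$, the first of which follows from H\"older's inequality (giving $\|g\|_{M^{p}_{q}(\omega)}\le\|g\|_{L^{p}(\omega)}$) and the second from the trivial domination of the weak $L^{q}$-quasinorm by the $L^{q}$-norm on each cube. With these in hand, $(2)\Rightarrow(3)$, $(3)\Rightarrow(4)$, $(5)\Rightarrow(6)$, $(5)\Rightarrow(3)$ and $(6)\Rightarrow(4)$ are all immediate, since composing a bounded map with a continuous embedding stays bounded. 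The equivalence $(1)\Leftrightarrow(2)$ is the classical Coifman--Fefferman theorem, and $(1)\Rightarrow(5)$ is the boundedness of Calder\'on--Zygmund operators (in particular the Riesz transforms) on weighted Morrey spaces due to Komori--Shirai \cite{KS}. Thus the whole theorem reduces to closing the loop with a single arrow from the weakest target space, namely $(4)\Rightarrow(1)$.

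For $(4)\Rightarrow(1)$ I would argue by testing. Fix a cube $Q$ of side length $\ell$, set $\sigma=\omega^{1-p'}$, and take $f=\sigma\chi_{Q}\ge0$. I first exploit the pointwise lower bound on the Riesz kernel: if $\tilde Q=Q+2\ell e_{1}$ is a neighbour separated from $Q$ in the $e_{1}$-direction, then for $x\in\tilde Q$ and $y\in Q$ one has $x_{1}-y_{1}\gtrsim\ell$ and $|x-y|\lesssim\ell$, whence $R_{1}f(x)\gtrsim\frac{1}{|Q|}\int_{Q}f$ on all of $\tilde Q$; the left neighbour $Q-2\ell e_{1}$ gives the same bound for $|R_{1}f|$ with the opposite sign. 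Evaluating the $WM^{p}_{q}(\omega)$-norm on the cube $\tilde Q$ itself, the normalisation collapses neatly, $\omega(\tilde Q)^{1/q}/\omega(\tilde Q)^{1/q-1/p}=\omega(\tilde Q)^{1/p}$, and since $\|\sigma\chi_{Q}\|_{L^{p}(\omega)}=\sigma(Q)^{1/p}$, hypothesis $(4)$ yields the shifted bound $\omega(\tilde Q)\,\sigma(Q)^{p-1}\lesssim|Q|^{p}$ for every cube $Q$ and for both neighbours $\tilde Q=Q\pm2\ell e_{1}$.

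The main obstacle is that this is only a \emph{shifted} $A_{p}$ inequality, relating $\omega$ on a neighbour to $\sigma$ on $Q$, rather than the genuine $A_{p}$ condition on a single cube. To bridge this I would use the elementary H\"older lower bound $|Q|^{p'}\le\omega(Q)^{p'-1}\sigma(Q)$, valid for all $Q$. Writing $R$ for the target cube: applying the shifted bound with $Q=R$ and neighbour $R-2\ell e_{1}$ gives $\omega(R-2\ell e_{1})^{p'-1}\lesssim\ell^{p'}/\sigma(R)$ (using $(p-1)(p'-1)=1$ and $p(p'-1)=p'$), and combining this with the H\"older lower bound for the cube $R-2\ell e_{1}$ produces the comparison $\sigma(R)\lesssim\sigma(R-2\ell e_{1})$. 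Feeding this back into the shifted bound with $Q=R-2\ell e_{1}$ and neighbour $R$, namely $\omega(R)\,\sigma(R-2\ell e_{1})^{p-1}\lesssim\ell^{p}$, finally gives $\omega(R)\,\sigma(R)^{p-1}\lesssim|R|^{p}$, i.e.\ $\omega\in A_{p}$. I expect the delicate point to be precisely this passage from the shifted inequality to $A_{p}$; everything upstream is either classical or a direct consequence of the space embeddings, and a single Riesz transform $R_{1}$ (both orientations) already suffices to run the testing argument.
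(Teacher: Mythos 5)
Your proposal is correct and follows essentially the same route as the paper: the only substantive step is $(4)\Rightarrow(1)$, proved by testing with $\omega^{1-p'}\chi_{Q}$, bounding the Riesz kernel from below on a disjoint neighbouring cube to get a shifted $A_{p}$ inequality, and then symmetrizing to remove the shift --- the paper uses $\sum_{j}R_{j}$ with a corner-sharing cube and transfers $\omega(Q)\lesssim\omega(Q')$ by testing with $\chi_{Q'}$, whereas you use a single $R_{1}$ with an axis-translated cube and transfer $\sigma(R)\lesssim\sigma(R-2\ell e_{1})$ via the H\"older bound $|Q|^{p'}\le\omega(Q)^{p'-1}\sigma(Q)$, which is the same idea in different clothing. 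The one point you should add is the standard regularization $f=(\omega+\epsilon)^{1-p'}\chi_{Q}$ with $\epsilon\to0$ to cover the case where $\omega^{1-p'}$ fails to be locally integrable (otherwise $\|f\|_{L^{p}(\omega)}=\sigma(Q)^{1/p}$ may be infinite and the shifted bound is vacuous); the paper handles this explicitly.
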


\vspace{0.5cm}

For the case $p=1$, we have

\begin{theorem}\label{main1.4}
Let $0<q<1.$ The following statements are equivalent:
\begin{enumerate}
\item [\rm(1)] $\omega\in A_{1}$;
\item [\rm(2)] $R_{j}$ is a bounded operator from $L^{1}(\omega)$ to $L^{1,\infty}(\omega)$ for all $j=1,2,\cdots,n$;
\item [\rm(3)] $R_{j}$ is a bounded operator from $L^{1}(\omega)$ to $M^{1}_{q}(\omega)$ for all $j=1,2,\cdots,n$;
\item [\rm(4)] $R_{j}$ is a bounded operator from $L^{1}(\omega)$ to $WM^{1}_{q}(\omega)$ for all $j=1,2,\cdots,n$.
\end{enumerate}
\end{theorem}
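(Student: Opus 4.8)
The plan is to prove Theorem \ref{main1.4} by closing the cycle $(1)\Rightarrow(2)\Rightarrow(3)\Rightarrow(4)\Rightarrow(1)$, in which three of the arrows are classical or formal and essentially all of the work is concentrated in $(4)\Rightarrow(1)$. First I would dispose of the easy directions. The implication $(1)\Rightarrow(2)$ is the classical weighted weak-type $(1,1)$ bound for Calder\'on--Zygmund operators against $A_{1}$ weights applied to each $R_{j}$, as recorded in \cite{CF}. For $(2)\Rightarrow(3)$ I would establish the continuous embedding $L^{1,\infty}(\omega)\hookrightarrow M^{1}_{q}(\omega)$, valid for every $0<q<1$: expanding the local integral through its distribution function and splitting at the level $\lambda_{0}=\|f\|_{L^{1,\infty}(\omega)}/\omega(Q)$, one uses the bound $\omega(\{x\in Q:|f|>\lambda\})\le\min\{\omega(Q),\|f\|_{L^{1,\infty}(\omega)}/\lambda\}$ to get $\int_{Q}|f|^{q}\omega\,dx\le\tfrac{1}{1-q}\,\omega(Q)^{1-q}\|f\|_{L^{1,\infty}(\omega)}^{q}$; taking the $1/q$-th power and dividing by $\omega(Q)^{1/q-1}$, the powers of $\omega(Q)$ cancel since $(1-q)/q=1/q-1$, yielding $\|f\|_{M^{1}_{q}(\omega)}\le(1-q)^{-1/q}\|f\|_{L^{1,\infty}(\omega)}$. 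Finally $(3)\Rightarrow(4)$ is the trivial embedding $M^{1}_{q}(\omega)\hookrightarrow WM^{1}_{q}(\omega)$. This is exactly the scheme already used for the maximal operator in Theorem \ref{main1.2}.

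The heart of the matter is $(4)\Rightarrow(1)$, and the obstruction is that, unlike the maximal function, $R_{j}$ evaluated inside the support of a test function exhibits cancellation, so no pointwise lower bound on $R_{j}f$ is available there. I would exploit instead the sign-definiteness of the kernel on separated cubes. Suppose $A$ and $B$ are cubes such that $y\in A$ and $x\in B$ force $x_{j}-y_{j}>0$ with both $x_{j}-y_{j}$ and $|x-y|$ comparable to a length $\rho$. Then for $f=\chi_{E}$ with $E\subseteq A$ one has $R_{j}f(x)=c_{n}\int_{E}\frac{x_{j}-y_{j}}{|x-y|^{n+1}}\,dy\gtrsim\rho^{-n}|E|$ for every $x\in B$. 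Inserting this lower bound into the weak Morrey estimate of $(4)$ tested on the cube $B$, with $\lambda$ just below $\rho^{-n}|E|$ so that the super-level set fills $B$, and using $\|\chi_{E}\|_{L^{1}(\omega)}=\omega(E)$, the exponents $1/q$ and $1-1/q$ combine to give $\rho^{-n}|E|\,\omega(B)\lesssim\omega(E)$. Taking the infimum over $E\subseteq A$ then yields the key \emph{separated estimate}
$$\langle\omega\rangle_{B}:=\frac{1}{|B|}\int_{B}\omega\,dx\ \lesssim\ \esinf_{A}\omega\qquad\text{whenever }|B|\approx\rho^{n}\approx\operatorname{dist}(A,B)^{n}.$$

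It remains to reassemble these separated estimates into the genuine single-cube bound $\langle\omega\rangle_{Q}\lesssim\esinf_{Q}\omega$, and this is the delicate step. Fixing a cube $Q$, I would choose a Lebesgue point $x_{0}\in Q$ with $\omega(x_{0})$ as close as desired to $\esinf_{Q}\omega$ and take a small source cube $A_{0}$ about $x_{0}$, so that $\esinf_{A_{0}}\omega\le\langle\omega\rangle_{A_{0}}$ is within $\varepsilon$ of $\esinf_{Q}\omega$. Decomposing $Q\setminus\{x_{0}\}$ into dyadic annuli about $x_{0}$, I note that each annulus at distance $\rho$ is covered by boundedly many cubes $B$ of side comparable to $\rho$ lying at distance comparable to $\rho$ from $A_{0}$; in each coordinate octant the dominant coordinate selects an index $j$ for which the kernel of $R_{j}$ (after fixing its sign) is positive and of size $\rho^{-n}$ on $A_{0}\times B$, which is precisely where the hypothesis that \emph{all} the $R_{j}$ are bounded is used. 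The separated estimate then gives $\langle\omega\rangle_{B}\lesssim\esinf_{A_{0}}\omega\lesssim\esinf_{Q}\omega+\varepsilon$ for every such $B$, so that the contribution of the annulus at scale $\rho$ to $\langle\omega\rangle_{Q}$ is $\lesssim(\rho/\ell(Q))^{n}\bigl(\esinf_{Q}\omega+\varepsilon\bigr)$. Summing the resulting geometric series over the dyadic scales $\rho\le\ell(Q)$ and letting $\varepsilon\to0$ produces $\langle\omega\rangle_{Q}\lesssim\esinf_{Q}\omega$ with a constant independent of $Q$, i.e.\ $\omega\in A_{1}$.

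I expect the main obstacle to be exactly this final reassembly: controlling the cancellation forces source and target cubes to be disjoint, so the separated estimate never directly couples $\omega$ on a single cube with itself, and recovering the balanced $A_{1}$ quotient is what the dyadic-annulus covering, anchored at a near-minimiser of $\omega$ and combined with the summability of the scales, is designed to overcome. The same scheme, run with the function $\sigma=\omega^{1-p'}$ in place of $\chi_{E}$ and with the $A_{p}$ quotient in place of the $A_{1}$ quotient, underlies the corresponding implication $(4)\Rightarrow(1)$ of Theorem \ref{main1.3}.
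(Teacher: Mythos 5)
Your treatment of the easy arrows coincides with the paper's: $(1)\Rightarrow(2)$ is the classical weighted weak $(1,1)$ bound, $(2)\Rightarrow(3)$ is the Kolmogorov-type embedding $L^{1,\infty}(\omega)\hookrightarrow M^{1}_{q}(\omega)$ for $q<1$ (this is exactly what Proposition \ref{prop2.1}(1) and Corollary \ref{cor2.1} record), and $(3)\Rightarrow(4)$ is trivial. For the substantive implication $(4)\Rightarrow(1)$ you take a genuinely different route. The paper works with the single operator $\sum_{j=1}^{n}R_{j}$ and \emph{one} pair of congruent cubes $Q$, $Q'$ sharing a corner, on which $\sum_{j}(x_{j}-y_{j})\geq|x-y|$ and $|x-y|^{-n}\geq C|Q|^{-1}$ make the combined kernel sign-definite of size $|Q|^{-1}$; testing the weak Morrey bound on $Q'$ gives $f_{Q}\leq\frac{C}{\omega(Q')}\int_{Q}f\omega\,dx$ for $f\geq0$ supported in $Q$, and the key trick is a symmetrization: swapping the roles of $Q$ and $Q'$ and taking $g=\chi_{Q'}$ yields $\omega(Q)\leq C\omega(Q')$, which upgrades the separated estimate to the single-cube estimate $f_{Q}\leq\frac{C}{\omega(Q)}\int_{Q}f\omega\,dx$; then $f=\chi_{Q_{1}}$ and Lebesgue differentiation finish exactly as in Theorem \ref{main1.2}. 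Your two-cube testing inequality $\langle\omega\rangle_{B}\lesssim\esinf_{A}\omega$ for separated cubes is correct and is essentially the same germ, but your reassembly is a multiscale covering argument where the paper needs only one reflection.

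As written, your reassembly has a scale-matching flaw. You fix a single source cube $A_{0}$ about $x_{0}$, yet the dyadic annuli exhausting $Q\setminus\{x_{0}\}$ have radii $\rho\to0$; once $\rho\lesssim\ell(A_{0})$ the target cubes $B$ are no longer at distance comparable to $\rho$ from $A_{0}$, and both the sign-definiteness and the lower bound $\rho^{-n}$ for the kernel on $A_{0}\times B$ fail. The repair is routine but must be made: use a source cube $A_{\rho}$ of side $\delta\rho$ centered at $x_{0}$ at each scale, and choose $x_{0}$ to be a density point of $\{x\in Q:\omega(x)<\esinf_{Q}\omega+\varepsilon\}$ so that $\esinf_{A_{\rho}}\omega\leq\esinf_{Q}\omega+\varepsilon$ for every $\rho$; the geometric series over scales then closes the argument. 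With that correction your proof is valid. What it buys is a cleaner isolated testing condition that would survive weakening the hypothesis (e.g.\ sign-definite directional truncations of a single $R_{j}$); what the paper's symmetrization buys is brevity and a single argument covering Theorems \ref{main1.3} and \ref{main1.4} for all $p\geq1$ at once.
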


\vspace{0.3cm}

In \cite[page 198]{S}, Stein showed that the convolution operators $T_{\epsilon}$ is bounded on weighted Lebesgue spaces if and only if $\omega\in A_{p}$. The convolution operators $T_{\epsilon}$ are defined by
$$T_{\epsilon}f=f\ast \Phi_{\epsilon},$$
where $\Phi$ is nonnegative, radial, and (radially) decreasing, with $\int \Phi(x)dx=1$, and we define $\Phi_{\epsilon}(x)=\epsilon^{-n}\Phi(x/\epsilon)$. We designate the class of such $\Phi$ by $\mathcal{R}$. Notice that if $\Phi$ is in $\mathcal{R}$ , then so is $\Phi_{\epsilon}$. It is useful to recall that
\begin{equation*}
Mf(x)=\sup_{\Phi\in \mathcal{R}}|f|\ast \Phi(x).
\end{equation*}
In fact, if $B_{\epsilon}=\{x:|x|<\epsilon\}$ then $|B_{\epsilon}|^{-1}\chi_{B_{\epsilon}}\in \mathcal{R}$, and the supremum over these elements of $\mathcal{R}$ is, by definition, equal to $Mf(x)$. For the other direction, we need only recall that any element of $\mathcal{R}$ is a limit of weighted averages of the $|B_{\epsilon}|^{-1}\chi_{B_{\epsilon}}$ (see \cite[Chapter 2, \S 2.1]{S}).

\begin{theorem}\label{main1.5}
Let $1<q<p<\infty.$ The following statements are equivalent:
\begin{enumerate}
\item [\rm(1)] $\omega\in A_{p}$;
\item [\rm(2)] $T_{\epsilon}$ is a bounded operator from $L^{p}(\omega)$ to $L^{p}(\omega)$;
\item [\rm(3)] $T_{\epsilon}$ is a bounded operator from $L^{p}(\omega)$ to $L^{p,\infty}(\omega)$;
\item [\rm(4)] $T_{\epsilon}$ is a bounded operator from $L^{p}(\omega)$ to $M^{p}_{q}(\omega)$;
\item [\rm(5)] $T_{\epsilon}$ is a bounded operator from $L^{p}(\omega)$ to $WM^{p}_{q}(\omega)$;
\item [\rm(6)] $T_{\epsilon}$ is a bounded operator from $M_{q}^{p}(\omega)$ to $M_{q}^{p}(\omega)$;
\item [\rm(7)] $T_{\epsilon}$ is a bounded operator from $M_{q}^{p}(\omega)$ to $WM_{q}^{p}(\omega)$.
\end{enumerate}
\end{theorem}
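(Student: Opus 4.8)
The plan is to funnel all seven conditions through the weak Morrey statement $(5)$, in the same spirit as the endpoint organisation of Theorem \ref{main1.1}, using two inputs: Stein's equivalence $(1)\Leftrightarrow(2)$, which I cite, and the pointwise domination of $T_\epsilon$ by $M$. First I would record the elementary embeddings that render most implications automatic. A Hölder argument with exponents $p/q$ and $(p/q)'$ gives $\|f\|_{M^p_q(\omega)}\le\|f\|_{L^p(\omega)}$, so $L^p(\omega)\hookrightarrow M^p_q(\omega)$; writing $\omega(E_\lambda)^{1/q}=\omega(E_\lambda)^{1/q-1/p}\omega(E_\lambda)^{1/p}$ on $E_\lambda=\{x\in Q:|f(x)|>\lambda\}$ and using $\omega(E_\lambda)\le\omega(Q)$ gives $L^{p,\infty}(\omega)\hookrightarrow WM^p_q(\omega)$; and trivially $M^p_q(\omega)\hookrightarrow WM^p_q(\omega)$. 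With these, $(2)\Rightarrow(3),(4)$, then $(3),(4)\Rightarrow(5)$, and $(6)\Rightarrow(4),(7)$, $(7)\Rightarrow(5)$ all follow either from a target embedding or from $L^p(\omega)\hookrightarrow M^p_q(\omega)$ on the domain side; thus each of $(2)$--$(7)$ implies $(5)$.

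For the forward direction I would use that, since $\Phi_\epsilon\in\mathcal R$ for every $\epsilon$, the identity $Mf=\sup_{\Phi\in\mathcal R}|f|*\Phi$ gives the pointwise bound $|T_\epsilon f(x)|\le(|f|*\Phi_\epsilon)(x)\le Mf(x)$. As every quasi-norm occurring here is monotone in $|f|$, boundedness of $M$ transfers to $T_\epsilon$; in particular, combining this domination with part $(5)$ of Theorem \ref{main1.1} yields $(1)\Rightarrow(6)$, while $(1)\Leftrightarrow(2)$ is Stein's theorem. Through the embedding chains of the first paragraph, $(1)$ then implies each of $(2)$--$(7)$, so it remains only to prove the single substantive implication $(5)\Rightarrow(1)$.

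The heart of the matter is $(5)\Rightarrow(1)$. Fix a cube $Q=Q(x_0,r)$, set $\sigma=\omega^{1-p'}$, and take as test function $f=\sigma_N\chi_Q$ with $\sigma_N=\min(\sigma,N)$ to ensure $f\in L^p(\omega)$, letting $N\to\infty$ at the end by monotone convergence. The operator scale must be matched to $Q$: since $\Phi$ is radial, decreasing, with $\int\Phi=1$, there are fixed $R,\delta>0$ with $\Phi\ge\delta$ on $B_R$, so $\Phi_\epsilon(y)\ge\delta\epsilon^{-n}$ for $|y|\le R\epsilon$; choosing $\epsilon\simeq r$ with $R\epsilon\ge\mathrm{diam}(Q)$ forces $\Phi_\epsilon(x-y)\gtrsim|Q|^{-1}$ for all $x,y\in Q$, whence $T_\epsilon f(x)\ge c|Q|^{-1}\sigma_N(Q)$ for every $x\in Q$. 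Testing the weak Morrey norm on this same cube with $\lambda$ a fixed fraction of this lower bound makes the level set equal to $Q$, giving $\|T_\epsilon f\|_{WM^p_q(\omega)}\gtrsim|Q|^{-1}\sigma_N(Q)\,\omega(Q)^{1/p}$; on the other side the algebraic identities $\sigma^{p}\omega=\sigma$ and $\sigma^{p-1}\omega=1$ yield $\|f\|_{L^p(\omega)}^p=\int_Q\sigma_N^p\omega\le\sigma_N(Q)$. Inserting both estimates into $(5)$ and simplifying produces $\omega(Q)\,\sigma_N(Q)^{p-1}\le C|Q|^p$, and $N\to\infty$ gives exactly $[\omega]_{A_p}\le C$.

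The main obstacle is this last step, and its delicate point is the matching of scales: a fixed $\epsilon$ only detects the $A_p$ balance at scales comparable to $\epsilon$, so the argument must vary $\epsilon$ with $Q$ and use the boundedness in $(5)$ uniformly in $\epsilon$ --- consistent with the forward direction, where the bound $|T_\epsilon f|\le Mf$ is itself independent of $\epsilon$. The remaining care is routine: the truncation $\sigma_N$ covers the possibility $\sigma(Q)=\infty$, which the final inequality then excludes, and the constant $C$ depends only on $p,q,n$, the operator norm in $(5)$, and the fixed constants $R,\delta$ attached to $\Phi$.
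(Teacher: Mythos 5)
Your proposal is correct and follows essentially the same route as the paper: all conditions are funneled into the weak Morrey statement $(5)$ via the embeddings $L^{p}(\omega)\hookrightarrow M^{p}_{q}(\omega)$, $L^{p,\infty}(\omega)\hookrightarrow WM^{p}_{q}(\omega)$ and $M^{p}_{q}(\omega)\hookrightarrow WM^{p}_{q}(\omega)$, the forward direction uses Stein's theorem together with $|T_{\epsilon}f|\le Mf$, and the key implication $(5)\Rightarrow(1)$ tests on (a regularization of) $\omega^{1-p'}\chi_{Q}$ with the scale $\epsilon$ matched to the cube and the lower bound of $\Phi$ near the origin. The only differences are cosmetic --- you truncate the test function by $\min(\sigma,N)$ where the paper uses $(\omega+\epsilon)^{-p'/p}$, and you treat general $\Phi\in\mathcal{R}$ directly rather than first reducing to $\Phi_{\epsilon}=|B_{\epsilon}|^{-1}\chi_{B_{\epsilon}}$ --- and your explicit remark that the hypothesis must be used uniformly in $\epsilon$ is a point the paper leaves implicit.
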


Finally, we also consider Muckenhoupt weight theory related to weighted BMO spaces. Weighted \rm{BMO} spaces play a fundamental role in many fields of mathematics such as harmonic analysis and partial differential equations; see \cite{BPT}, \cite{BT}, \cite{G} and \cite{JM}. Let us introduce the weighted \rm{BMO} spaces.

Let $1\leq p<\infty$. Given a a nonnegative locally integrable function $\omega$, the weighted $\mathrm{BMO}$ space $\mathrm{BMO}^{p}(\omega)$ is defined be the set of all functions $f\in L^{1}_{\mathrm{loc}}(\mathbb{R}^{n})$ such that
$$\|f\|_{\mathrm{BMO}^{p}(w)}:=\sup_{Q}\bigg(\frac{1}{w(Q)}\int_{Q}|f(y)-f_{Q}|^{p}\omega(y)^{1-p}dy\bigg)^{1/p}<\infty,$$
where the supremum is taken over all cubes $Q\subset \mathbb{R}^{n}$. We write $\mathrm{BMO}^{1}(\omega)=\mathrm{BMO}(\omega)$ simple.

\begin{remark}
\rm(i) For $1\leq p<\infty$ and $\omega\in A_{1}$, Garc\'{i}a-Cuerva \cite{G} proved that $\mathrm{BMO}(\omega)=\mathrm{BMO}^{p}(\omega)$ with equivalence of the corresponding norms.

\rm(ii) For the case $p=\infty$, the weighted $\mathrm{BMO}$ space can be defined as
$$\|f\|_{\mathrm{BMO}^{\infty}(\omega)}:=\mathop\mathrm{ess~ sup}_{x\in Q}\frac{|f(x)-f_{Q}|}{\omega(x)}.$$
\end{remark}

Now we state the result of the characterization of Muckenhoupt $A_{p}$ class via weighted \rm{BMO} spaces.
\begin{theorem}\label{main1.6}
Let $1\leq p< \infty$ and $\frac{1}{p} + \frac{1}{p'}= 1$. Given a nonnegative locally integrable function $\omega$, $\omega\in A_{p}$ if and only if $\omega\in \mathrm{BMO}^{p'}(\omega)$. Moreover,
$$[\omega]^{1/p}_{A_{p}}\leq \|\omega\|_{{\rm BMO}^{p'}(\omega)}+1\leq 3[\omega]^{1/p}_{A_{p}}.$$
\end{theorem}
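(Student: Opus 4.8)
The plan is to fix a cube $Q$ and reduce the statement to a pair of per‑cube inequalities, after recasting the weighted BMO integrand in terms of the dual weight. Writing $p'=p/(p-1)$, the key algebraic observation is that the exponent in the definition of $\mathrm{BMO}^{p'}(\omega)$ satisfies $1-p'=-1/(p-1)$, so that $\omega^{1-p'}=\omega^{-1/(p-1)}=:\sigma$ is precisely the dual weight appearing in the $A_p$ condition. Setting $\omega_Q:=|Q|^{-1}\int_Q\omega$ and $\sigma_Q:=|Q|^{-1}\int_Q\sigma$, one has $[\omega]_{A_p}^{1/p}=\sup_Q\omega_Q^{1/p}\sigma_Q^{1/p'}$, and the single identity driving both directions is
$$\int_Q\omega^{p'}\sigma\,dy=\int_Q\omega^{p'}\omega^{1-p'}\,dy=\int_Q\omega\,dy=\omega(Q),$$
which says that $\omega\in L^{p'}(Q,\sigma\,dy)$ with $\|\omega\|_{L^{p'}(\sigma\,dy)}=\omega(Q)^{1/p'}$.

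For the lower bound $[\omega]_{A_p}^{1/p}\le\|\omega\|_{\mathrm{BMO}^{p'}(\omega)}+1$, I would work in $L^{p'}(Q,\sigma\,dy)$ and apply the triangle (Minkowski) inequality to the splitting $\omega_Q=\omega-(\omega-\omega_Q)$. The constant function has norm $\|\omega_Q\|_{L^{p'}(\sigma\,dy)}=\omega_Q\,\sigma(Q)^{1/p'}$; the term $\|\omega\|_{L^{p'}(\sigma\,dy)}=\omega(Q)^{1/p'}$ is evaluated by the identity above; and, writing $A_Q:=\omega(Q)^{-1}\int_Q|\omega-\omega_Q|^{p'}\sigma\,dy$ for the local BMO average, $\|\omega-\omega_Q\|_{L^{p'}(\sigma\,dy)}=(\omega(Q)A_Q)^{1/p'}\le\omega(Q)^{1/p'}\|\omega\|_{\mathrm{BMO}^{p'}(\omega)}$. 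Collecting these three estimates and dividing through by the common factor $(\omega_Q|Q|)^{1/p'}$ leaves exactly $\omega_Q^{1/p}\sigma_Q^{1/p'}\le\|\omega\|_{\mathrm{BMO}^{p'}(\omega)}+1$; taking the supremum over $Q$ gives the claim.

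For the upper bound I would run the estimate in reverse: from $|\omega-\omega_Q|^{p'}\le 2^{p'-1}(\omega^{p'}+\omega_Q^{p'})$ and integration against $\sigma\,dy$, the same identity together with $\int_Q\omega_Q^{p'}\sigma\,dy=\omega_Q^{p'}\sigma(Q)$ reduce $A_Q$ to $2^{p'-1}\bigl(1+\omega_Q^{p'-1}\sigma_Q\bigr)$. Since $p'-1=1/(p-1)$, the product $\omega_Q^{p'-1}\sigma_Q$ equals $\bigl(\omega_Q\sigma_Q^{p-1}\bigr)^{1/(p-1)}$, a power of the local $A_p$ constant, so $A_Q\le 2^{p'}[\omega]_{A_p}^{p'-1}$ after invoking $[\omega]_{A_p}\ge1$. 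Raising to the power $1/p'$ yields $\|\omega\|_{\mathrm{BMO}^{p'}(\omega)}\le 2[\omega]_{A_p}^{1/p}$, and adding $1\le[\omega]_{A_p}^{1/p}$ gives the stated $\|\omega\|_{\mathrm{BMO}^{p'}(\omega)}+1\le 3[\omega]_{A_p}^{1/p}$. The endpoint $p=1$ (so $p'=\infty$) is handled separately but more easily: with the $\esinf$/ess‑sup definition, the forward and reverse triangle inequalities applied pointwise to $|\omega(x)-\omega_Q|/\omega(x)$ compare $\operatorname*{ess\,sup}_{x\in Q}\omega_Q/\omega(x)=\omega_Q/\esinf_Q\omega$ with the local $A_1$ ratio, giving both inequalities with the same constants.

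I expect the only genuine subtlety to be organizational rather than technical: recognizing that the natural home for the argument is $L^{p'}$ of the measure $\sigma\,dy$ (not Lebesgue measure), which is what makes the cancellation $\int_Q\omega^{p'}\sigma=\omega(Q)$ available; once this is in place both directions are short. The secondary point requiring care is the bookkeeping of the constants $2^{p'-1}$ and the repeated use of $[\omega]_{A_p}\ge1$ to pass from the intermediate bounds to the clean factor $3[\omega]_{A_p}^{1/p}$.
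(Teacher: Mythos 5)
Your proof is correct and follows essentially the same route as the paper's: both directions rest on the identity $\int_Q\omega^{p'}\omega^{1-p'}\,dy=\omega(Q)$ together with a triangle-inequality splitting of $\omega-\omega_Q$ in $L^{p'}(Q,\omega^{1-p'}\,dy)$, and the $p=1$ endpoint is handled by the same pointwise estimates. The only cosmetic difference is in the upper bound, where the paper applies Minkowski's inequality directly and obtains $\|\omega\|_{\mathrm{BMO}^{p'}(\omega)}\le 1+[\omega]_{A_p}^{1/p}$, while you use the convexity bound $|a-b|^{p'}\le 2^{p'-1}(|a|^{p'}+|b|^{p'})$ and obtain the slightly weaker $2[\omega]_{A_p}^{1/p}$; both give the stated factor $3[\omega]_{A_p}^{1/p}$ after invoking $[\omega]_{A_p}\ge 1$.
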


\vspace{0.3cm}

For the case $p=\infty$, we have the following result.
\begin{theorem}\label{main1.7}
Given a nonnegative locally integrable function $\omega\in A_{\infty}$, then $\omega\in \mathrm{BMO}(\omega)$.
\end{theorem}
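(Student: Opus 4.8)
The plan is to deduce membership in $\mathrm{BMO}(\omega) = \mathrm{BMO}^1(\omega)$ from the $p<\infty$ case already recorded in Theorem \ref{main1.6}, using the decomposition $A_\infty = \bigcup_{1\le p<\infty} A_p$ noted in the introduction. Recall that for $p=1$ the defining weight $\omega^{1-p}$ equals $1$, so membership in $\mathrm{BMO}(\omega)$ amounts to the finiteness of
$$\|\omega\|_{\mathrm{BMO}(\omega)} = \sup_Q \frac{1}{\omega(Q)}\int_Q |\omega(y) - \omega_Q|\,dy, \qquad \omega_Q := \frac{1}{|Q|}\int_Q \omega.$$
First I would fix, using $\omega\in A_\infty$, an exponent $p\in(1,\infty)$ with $\omega\in A_p$; such a $p$ exists because $A_\infty$ is the union of the $A_p$ classes. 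Theorem \ref{main1.6} then gives $\omega\in\mathrm{BMO}^{p'}(\omega)$ together with the quantitative bound $\|\omega\|_{\mathrm{BMO}^{p'}(\omega)}\le 3[\omega]_{A_p}^{1/p}-1<\infty$.

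The remaining step is a H\"older-type nesting showing $\mathrm{BMO}^{p'}(\omega)\subseteq\mathrm{BMO}^1(\omega)$ with norm control. On a fixed cube $Q$ I would split the integrand as
$$\int_Q |\omega-\omega_Q|\,dy = \int_Q \big(|\omega-\omega_Q|\,\omega^{(1-p')/p'}\big)\,\omega^{(p'-1)/p'}\,dy,$$
which is legitimate since the two exponents of $\omega$ sum to $0$, and then apply H\"older's inequality with conjugate exponents $p'$ and $p$. The first factor produces exactly $\big(\int_Q |\omega-\omega_Q|^{p'}\omega^{1-p'}\,dy\big)^{1/p'}$, while in the second factor the arithmetic $\tfrac{(p'-1)p}{p'}=1$ collapses the exponent of $\omega$ to $1$, giving $\big(\int_Q \omega\big)^{1/p} = \omega(Q)^{1/p}$. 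Combining with the definition of the $\mathrm{BMO}^{p'}(\omega)$ norm yields
$$\int_Q |\omega-\omega_Q|\,dy \le \|\omega\|_{\mathrm{BMO}^{p'}(\omega)}\,\omega(Q)^{1/p'}\,\omega(Q)^{1/p} = \|\omega\|_{\mathrm{BMO}^{p'}(\omega)}\,\omega(Q),$$
so that $\|\omega\|_{\mathrm{BMO}(\omega)} \le \|\omega\|_{\mathrm{BMO}^{p'}(\omega)} < \infty$ after dividing by $\omega(Q)$ and taking the supremum over $Q$. This closes the argument.

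I do not expect a genuine obstacle here: all the analytic weight lies in Theorem \ref{main1.6}, and the only thing to verify is the exponent bookkeeping in the H\"older step, which is routine. In fact, it is worth remarking that the conclusion is even more elementary than this route suggests: since $\omega\ge 0$, the crude triangle inequality $|\omega-\omega_Q|\le\omega+\omega_Q$ already gives $\int_Q|\omega-\omega_Q|\,dy\le \omega(Q)+|Q|\,\omega_Q = 2\omega(Q)$, hence $\|\omega\|_{\mathrm{BMO}(\omega)}\le 2$ for every nonnegative locally integrable $\omega$. Thus the $A_\infty$ hypothesis is not actually needed for membership; its role is only to place Theorem \ref{main1.7} as the natural $p=\infty$ endpoint of the quantitative equivalence in Theorem \ref{main1.6}, where the constants genuinely depend on $[\omega]_{A_p}$.
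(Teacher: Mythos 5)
Your proposal is correct and follows essentially the same route as the paper: choose a finite $p$ with $\omega\in A_p$, invoke Theorem \ref{main1.6} to get $\omega\in\mathrm{BMO}^{p'}(\omega)$, and conclude via the embedding $\mathrm{BMO}^{p'}(\omega)\subset\mathrm{BMO}(\omega)$, which the paper asserts without proof and you verify by the (correct) H\"older computation. Your closing observation that $\|\omega\|_{\mathrm{BMO}(\omega)}\le 2$ holds for every nonnegative locally integrable $\omega$, so the $A_\infty$ hypothesis is not actually needed, is also made by the paper in the remark immediately following its proof.
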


Though the proof of Theorem \ref{main1.6} is fairly straightforward, this result seems interesting. It appears to provide some insight into the growth allowed for Muckenhoupt weights. As an application, the operator norms of weighted Hardy-Littlewood average operator $U_{\psi}$ on weighted \rm{BMO} spaces are also obtained.

\vspace{0.3cm}

\begin{theorem}\label{main1.8}
Let $\psi: [0,1]\rightarrow [0,\infty)$ be a function, $ p\in [1, \infty]$, $\alpha\in \mathbb{R}$, $\omega\in A_{p'}$ and $\omega(tx)=t^{\alpha}\omega(x)$ for all $t\in (0,\infty)$.
Then $U_{\psi}: \mathrm{BMO}^{p}(\omega)\rightarrow \mathrm{BMO}^{p}(\omega)$ exists as a bounded operator if and only if
\begin{equation}\label{1.5-1}
 \int_{0}^{1}t^{\alpha}\psi(t)dt<\infty.
\end{equation}
Moreover, when $(\ref{1.5-1})$ holds, the operator norm of $U_{\psi}$ on $\mathrm{BMO}^{p}(\omega)$ is given by
$$\|U_{\psi}\|_{\mathrm{BMO}^{p}(\omega)\rightarrow \mathrm{BMO}^{p}(\omega)}=\int_{0}^{1}t^{\alpha}\psi(t)dt.$$
\end{theorem}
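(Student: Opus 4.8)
The plan is to exploit the dilation structure of the weighted Hardy--Littlewood average operator, which here is $U_\psi f(x)=\int_0^1 f(tx)\,\psi(t)\,dt$. Writing $\delta_t f(x):=f(tx)$ for the dilation, one has $U_\psi f=\int_0^1(\delta_t f)\,\psi(t)\,dt$, so everything reduces to understanding how $\delta_t$ acts on the $\mathrm{BMO}^p(\omega)$ seminorm. The key lemma I would isolate is the exact scaling identity
$$\|\delta_t f\|_{\mathrm{BMO}^p(\omega)}=t^{\alpha}\,\|f\|_{\mathrm{BMO}^p(\omega)},\qquad t>0,$$
valid for every $f$, which encodes both the homogeneity $\omega(tx)=t^\alpha\omega(x)$ and the form of the weighted norm.

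To prove this identity I would fix a cube $Q$ and perform the change of variables $z=ty$ in the averaged quantity $\frac{1}{\omega(Q)}\int_Q|\delta_t f(y)-(\delta_t f)_Q|^p\omega(y)^{1-p}\,dy$. First, $(\delta_t f)_Q=\frac{1}{|Q|}\int_Q f(ty)\,dy=f_{tQ}$, so the mean subtraction transforms correctly into the mean over the dilated cube $tQ$. The substitution then produces three powers of $t$: a Jacobian factor $t^{-n}$; the factor $t^{\alpha(p-1)}$ coming from $\omega(z/t)^{1-p}=t^{\alpha(p-1)}\omega(z)^{1-p}$ via homogeneity; and $t^{n+\alpha}$ from $\omega(Q)=t^{-(n+\alpha)}\omega(tQ)$. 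These combine to $t^{\alpha p}$, and after taking $p$-th roots and the supremum over cubes (using that $Q\mapsto tQ$ is a bijection of cubes) the claimed scaling follows. Getting this exponent bookkeeping exactly right is the technical heart of the argument; for $p=\infty$ the same substitution in the essential-supremum norm of Remark~(ii) yields the same factor $t^\alpha$. With the scaling in hand, the sufficiency and the upper bound for the norm are immediate from Minkowski's integral inequality applied to the weighted $L^p$ norm on each cube: if $\int_0^1 t^\alpha\psi(t)\,dt<\infty$ then
$$\|U_\psi f\|_{\mathrm{BMO}^p(\omega)}\le \int_0^1\|\delta_t f\|_{\mathrm{BMO}^p(\omega)}\,\psi(t)\,dt=\Big(\int_0^1 t^\alpha\psi(t)\,dt\Big)\|f\|_{\mathrm{BMO}^p(\omega)},$$
so $\|U_\psi\|\le\int_0^1 t^\alpha\psi(t)\,dt$. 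Here I would note that $U_\psi$ maps constants to constants, hence is well defined on $\mathrm{BMO}^p(\omega)$ modulo constants.

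For the reverse inequality and the necessity of $(\ref{1.5-1})$, the main point — and where Theorem \ref{main1.6} enters decisively — is to produce an extremal function. Since $\omega\in A_{p'}$ and $(p')'=p$, Theorem \ref{main1.6} (together with Theorem \ref{main1.7} for the endpoint) gives $\omega\in\mathrm{BMO}^p(\omega)$; moreover $\omega$ is itself homogeneous of degree $\alpha$, so $U_\psi\omega(x)=\int_0^1 t^\alpha\omega(x)\psi(t)\,dt=\big(\int_0^1 t^\alpha\psi(t)\,dt\big)\omega(x)$. Thus $\omega$ is an eigenfunction of $U_\psi$ with eigenvalue $\int_0^1 t^\alpha\psi(t)\,dt$. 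If $\omega$ is not a.e. constant, then $\|\omega\|_{\mathrm{BMO}^p(\omega)}\ne 0$, and this eigenrelation shows both that boundedness forces $\int_0^1 t^\alpha\psi(t)\,dt<\infty$ (otherwise $U_\psi\omega\equiv\infty$ on the full-measure set $\{\omega>0\}$, contradicting $U_\psi\omega\in\mathrm{BMO}^p(\omega)$) and that $\|U_\psi\|\ge\int_0^1 t^\alpha\psi(t)\,dt$, matching the upper bound. The only degenerate case is $\omega$ constant, which forces $\alpha=0$; there I would instead test against $\log|x|\in\mathrm{BMO}=\mathrm{BMO}^p(\omega)$, for which $U_\psi(\log|x|)=\big(\int_0^1\psi(t)\,dt\big)\log|x|$ modulo constants, again giving the exact lower bound since $\int_0^1\psi=\int_0^1 t^\alpha\psi$ when $\alpha=0$. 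Combining the two bounds yields the claimed identity for the operator norm. I expect the genuine obstacle to be conceptual rather than computational: recognizing that the correct extremizer is the weight $\omega$ itself, and that Theorem \ref{main1.6} is precisely the tool guaranteeing $\omega\in\mathrm{BMO}^p(\omega)$.
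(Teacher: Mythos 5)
Your proof is correct and follows essentially the same route as the paper: the upper bound via Minkowski's integral inequality combined with the substitution $z=ty$ and the homogeneity $\omega(tx)=t^{\alpha}\omega(x)$ (you merely package this bookkeeping as the dilation-scaling identity $\|\delta_t f\|_{\mathrm{BMO}^p(\omega)}=t^{\alpha}\|f\|_{\mathrm{BMO}^p(\omega)}$, whereas the paper carries out the same computation inline), and the necessity/lower bound by testing on the eigenfunction $f_0=\omega$, which lies in $\mathrm{BMO}^p(\omega)$ by Theorem \ref{main1.6} because $\omega\in A_{p'}$ and $(p')'=p$. Your extra discussion of the degenerate case of constant $\omega$ is a refinement the paper silently omits.
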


\vspace{0.3cm}
\begin{remark}
$(\mathrm{i})$ If $\omega(x)=|x|^{\alpha}$, the condition $\omega(tx)=t^{\alpha}\omega(x)$ holds.

$(\mathrm{ii})$ The norm from $\mathrm{BMO}^{p}(\omega)$ to itself of the operator $U_{\psi}$ is independent of $p$.
\end{remark}

\vspace{0.3cm}

The definition of weighted Hardy-Littlewood average operator $U_{\psi}$ as follows.

Let $\psi: [0,1]\rightarrow [0,\infty)$ be a function. For a measurable complex valued function $f$ on $\mathbb{R}^{n}$, the weighted Hardy-Littlewood average operator $U_{\psi}$ is defined as
$$(U_{\psi}f)(x)=\int_{0}^{1}f(tx)\psi(t)dt.$$
It was first defined by Carton-Lebrun and Fosset in \cite{CF2} and they showed that if $t^{1-n}\psi(t)$ is bounded on $[0,1]$ then $U_{\psi}$ is bounded on $\mathrm{BMO}(\mathbb{R}^{n})$. In 2001, Xiao \cite{X} obtained that $U_{\psi}$ is bounded on $L^{p}(\mathbb{R}^{n})$, $1\leq p\leq \infty$, if and only if
\begin{equation}\label{1.5-2}
\int_{0}^{1}t^{-n/p}\psi(t)dt<\infty.
\end{equation}
Meanwhile, when $(\ref{1.5-2})$ holds,
$$\|U_{\psi}\|_{L^{p}(\mathbb{R}^{n})\rightarrow L^{p}(\mathbb{R}^{n})}=\int_{0}^{1}t^{-n/p}\psi(t)dt.$$
Xiao also showed that $U_{\psi}$ is bounded on $\mathrm{BMO}(\mathbb{R}^{n})$  if and only if
\begin{equation}\label{1.5-3}
\int_{0}^{1}\psi(t)dt<\infty.
\end{equation}
And when $(\ref{1.5-3})$ holds, the precise norm of $U_{\psi}$ on $\mathrm{BMO}(\mathbb{R}^{n})$ is given by
$$\|U_{\psi}\|_{\mathrm{BMO}(\mathbb{R}^{n})\rightarrow \mathrm{BMO}(\mathbb{R}^{n})}=\int_{0}^{1}\psi(t)dt.$$

\vspace{0.5cm}

This paper is organized as follows. In Section 2, we prove that $A_{p}$ weights give a characterization of weighted Morrey spaces $M^{p}_{q}(\omega)$ boundedness for the Hardy-Littlewood maximal operator (see Theorem \ref{main1.1} and Theorem \ref{main1.2}), Riesz transform (see Theorem \ref{main1.3} and Theorem \ref{main1.4}) and convolution operators $T_{\epsilon}$ (see Theorem \ref{main1.5}). In Section 3, we address $A_{p}$ weight theory in connection to weighted {\rm BMO} spaces and its application (see Theorem \ref{main1.6}, Theorem \ref{main1.7} and Theorem \ref{main1.8}).

Throughout this paper, the letter $C$ denotes constants which are independent of main variables and may change from one occurrence to another.

\vspace{0.3cm}

\section{Proof of Theorem \ref{main1.1} $\sim$ Theorem \ref{main1.5}}

First of all, we compare with weighted Morrey spaces and weighted weak Morrey spaces. It is clear that $M^{p}_{r}(\omega)$ is contained in $WM_{q}^{p}(\omega)$ and $\|\cdot\|_{WM^{p}_{q}(\omega)}\leq \|\cdot\|_{M^{p}_{q}(\omega)} \leq \|\cdot\|_{M^{p}_{r}(\omega)}$ if $1< q\leq r< p<\infty$.
However, for $1< r< q\leq p<\infty$, one has the reverse inequality as follows.

\vspace{0.3cm}
\begin{propo}\label{prop2.1}
Let $0< q\leq p<\infty$ and $\omega$ be a nonnegative locally integral function on $\mathbb{R}^{n}$.
\begin{enumerate}
\item [\rm(1)] If $f\in WM_{q}^{p}(\omega)$, then for all $r\in(0,q)$ and all cube $Q$, there exists a constant $C>0$ such that
$$\int_{Q}|f(x)|^{r}\omega(x)dx\leq C\omega(Q)^{1-r/p}\|f\|^{r}_{WM^{p}_{q}(\omega)}.$$
\item [\rm(2)] If there exists $r\in(0,q)$ and constant $C>0$ such that
\begin{equation}\label{2-1}
\int_{\Omega}|f(x)|^{r}\omega(x)dx\leq C\omega(\Omega)^{1-r/p}
\end{equation}
for all set $\Omega$ with $\omega(\Omega)<\infty$, then $f\in WM^{p}_{q}(\omega)$.
\end{enumerate}
\end{propo}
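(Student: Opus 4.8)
The plan is to translate both parts into statements about the weighted distribution function $\lambda\mapsto\omega(\{x\in Q:|f(x)|>\lambda\})$ and to use the layer-cake identity $\int_Q|f|^r\omega\,dx=r\int_0^\infty\lambda^{r-1}\omega(\{x\in Q:|f|>\lambda\})\,d\lambda$ as the bridge between the weak-Morrey quasinorm and the $L^r$-type averages. This is the natural setting, since the weak-Morrey condition is itself a statement about level sets.

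For part (1), I would first unwind the definition of $\|\cdot\|_{WM^p_q(\omega)}$ to record, for every cube $Q$ and every $\lambda>0$, the weak estimate $\omega(\{x\in Q:|f|>\lambda\})\le\lambda^{-q}\|f\|_{WM^p_q(\omega)}^q\,\omega(Q)^{1-q/p}$, alongside the trivial bound $\omega(\{x\in Q:|f|>\lambda\})\le\omega(Q)$. Inserting the minimum of these two bounds into the layer-cake formula, I would split the $\lambda$-integral at the crossover level $\lambda_0=\|f\|_{WM^p_q(\omega)}\,\omega(Q)^{-1/p}$, where the two bounds coincide: use the trivial bound on $(0,\lambda_0)$ and the weak bound on $(\lambda_0,\infty)$. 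The tail integral $\int_{\lambda_0}^\infty\lambda^{r-1-q}\,d\lambda$ converges precisely because $r<q$, contributing a factor $1/(q-r)$, and a short computation shows that each of the two pieces equals a constant multiple of $\|f\|_{WM^p_q(\omega)}^r\,\omega(Q)^{1-r/p}$, with combined constant $q/(q-r)$, which is exactly the asserted inequality.

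For part (2), I would fix a cube $Q$ and $\lambda>0$ and apply the hypothesis to the level set $\Omega=E_\lambda:=\{x\in Q:|f|>\lambda\}$, which satisfies $\omega(E_\lambda)\le\omega(Q)<\infty$ by local integrability of $\omega$. Since $|f|^r>\lambda^r$ on $E_\lambda$, the hypothesis gives $\lambda^r\omega(E_\lambda)\le\int_{E_\lambda}|f|^r\omega\,dx\le C\,\omega(E_\lambda)^{1-r/p}$, and rearranging yields $\lambda\,\omega(E_\lambda)^{1/p}\le C^{1/r}$. Writing $\omega(E_\lambda)^{1/q}=\omega(E_\lambda)^{1/p}\cdot\omega(E_\lambda)^{1/q-1/p}$ and using $E_\lambda\subseteq Q$ together with $1/q-1/p\ge0$ (valid since $q\le p$), I obtain $\lambda\,\omega(E_\lambda)^{1/q}\le C^{1/r}\,\omega(Q)^{1/q-1/p}$; taking the supremum over $\lambda$ and then over $Q$ shows $f\in WM^p_q(\omega)$ with $\|f\|_{WM^p_q(\omega)}\le C^{1/r}$.

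Neither direction hides a serious difficulty; the genuinely structural point is the role of the strict inequality $r<q$, which is exactly what forces the tail integral in part (1) to converge and thereby renders the $L^r$-average finite. The only minor care needed elsewhere is the choice of the optimal split level $\lambda_0$ in part (1) and the correct use of the monotonicity of $\omega(E_\lambda)^{1/q-1/p}$ under set inclusion in part (2); both become routine once the layer-cake viewpoint is adopted.
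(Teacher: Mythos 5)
Your proposal is correct and follows essentially the same route as the paper: part (1) via the layer-cake formula with the trivial bound $\omega(Q)$ below a crossover level and the weak-type bound above it (the paper splits at $N=\|f\|_{WM^p_q(\omega)}(r/(q-r))^{1/q}\omega(Q)^{-1/p}$ rather than your $\lambda_0$, which only changes the constant), and part (2) by applying the hypothesis to the level set $\{x\in Q:|f|>\lambda\}$ and using $\omega(\Omega)\le\omega(Q)$ with $1/q-1/p\ge 0$. Your observation that $\omega(E_\lambda)\le\omega(Q)<\infty$ follows directly from local integrability even lets you skip the paper's side argument that the global level set has finite measure.
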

\begin{proof}
Let $f\in WM^{p}_{q}(\omega)$. For any $\lambda>0$
$$\lambda\Big(\int_{\{x\in Q: |f(x)|>\lambda\}}\omega(x)dx\Big)^{1/q}\leq \|f\|_{WM_{q}^{p}(\omega)}\omega(Q)^{1/q-1/p}.$$
Choose
$$N=\|f\|_{WM^{p}_{q}(\omega)}\big(\frac{r}{q-r}\big)^{1/q}\omega(Q)^{-1/p},$$
for any $0<r<q$,
\begin{eqnarray*}
\int_{Q}|f(x)|^{r}\omega(x)dx&=&r\int_{0}^{\infty}\lambda^{r-1}\omega\big(\{x\in Q:|f(x)|>\lambda\}\big)d\lambda\\
&\leq&r\int_{0}^{N}\lambda^{r-1}\omega(Q)d\lambda+r\int_{N}^{\infty}\lambda^{r-1}\frac{\|f\|^{q}_{WM^{p}_{q}(\omega)}}{\lambda^{q}}\omega(Q)^{1-q/p}d\lambda\\
&=&N^{r}\omega(Q)+\|f\|^{p}_{WM^{p}_{q}(\omega)}\frac{r}{q-r}N^{r-q}\omega(Q)^{1-q/p},
\end{eqnarray*}
which implies
$$\Big(\int_{Q}|f(x)|^{r}\omega(x)dx\Big)^{1/r}\leq 2\|f\|_{WM^{p}_{q}(\omega)}\big(\frac{r}{q-r}\big)^{1/q}\omega(Q)^{1/r-1/p};$$
that is,
$$\int_{Q}|f(x)|^{r}\omega(x)dx\leq C\omega(Q)^{1-r/p}\|f\|^{r}_{WM^{p}_{q}(\omega)}.$$
\vskip 0.5cm
\noindent
{\it Proof of \rm{(2)}.} For any $\lambda>0$, take $E=\{x\in \mathbb{R}^{n}:|f(x)|>\lambda\}$, then $\omega(E)<\infty$. Otherwise, there is a sequence $\{E_{k}\}$ of measurable sets such that $E_{k}\subset E$ and $\omega(E_{k})=k$ for $k=0,1,2,\cdots$. Thus for every $k$,
$$\lambda^{r}k=\lambda^{r}\omega(E_{k})\leq \int_{E_{k}}|f(x)|^{r}\omega(x)dx\leq C\omega(E_{k})^{1-r/p}=Ck^{1-r/p}.$$
However, it is not true.

Thus, for any cube $Q\subset \mathbb{R}^{n}$, take $\Omega=\{x\in Q: |f(x)|>\lambda\}$(for $p=q$, take $\Omega=E$), then $\omega(\Omega)\leq \omega(E)<\infty$. By (\ref{2-1}), we have
$$\lambda^{r}\omega(\Omega)\leq \int_{\Omega}|f(x)|^{r}\omega(x)dx\leq C\omega(\Omega)^{1-r/p}.$$
It follows that
$$\lambda\omega(Q)^{1/p-1/q}\omega(\Omega)^{1/q}\leq \lambda \omega(\Omega)^{1/p}\leq C.$$
Hence
$$\sup_{Q}\frac{1}{\omega(Q)^{1/q-1/p}}\sup_{\lambda>0}\lambda\Big(\int_{\{x\in Q: |f(x)|>\lambda\}}\omega(x)dx\Big)^{1/q}\leq C,$$
then $f\in WM^{p}_{q}(\omega)$.
\end{proof}

\begin{remark}\label{rem}\rm
In fact, for $p=q$, we can replace "for all cube $Q$" in Proposition \ref{prop2.1} (1) with "for all set $\Omega$ with $\omega(\Omega)<\infty$". For the proof similar arguments are applied with necessary modifications.
\end{remark}

As a corollary of Proposition \ref{prop2.1}, we get

\begin{corollary}\label{cor2.1}
Let $0<r<q<p<\infty$ and $\omega$ be a nonnegative locally integral function on $\mathbb{R}^{n}$. Weighted Morrey space $M^{p}_{r}(\omega)$ is contained in $WM_{q}^{p}(\omega)$ and
\begin{equation*}
\|\cdot\|_{M^{p}_{r}(\omega)}\leq C\|\cdot\|_{WM_{q}^{p}(\omega)}\leq C\|\cdot\|_{M^{p}_{q}(\omega)}\leq C\|\cdot\|_{WM^{p}_{p}(\omega)}=C\|\cdot\|_{L^{p,\infty}(\omega)}.
\end{equation*}
\end{corollary}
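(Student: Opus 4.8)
The plan is to read the displayed chain from left to right and verify each of its four links separately; throughout I fix $0<r<q<p<\infty$ and an arbitrary measurable $f$. Each link is either a direct application of Proposition~\ref{prop2.1}(1) after relabeling the indices, the elementary ``strong dominates weak'' estimate, or a direct unravelling of the definitions, so that chaining the four links completes the proof of the corollary.

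For the first link $\|f\|_{M^p_r(\omega)}\leq C\|f\|_{WM^p_q(\omega)}$ I would apply Proposition~\ref{prop2.1}(1) with the pair $(q,p)$ and the exponent $r\in(0,q)$: for every cube $Q$ it gives $\int_Q|f|^r\omega\leq C\,\omega(Q)^{1-r/p}\|f\|^r_{WM^p_q(\omega)}$. Raising to the power $1/r$, so that the weight factor becomes $\omega(Q)^{1/r-1/p}$, then dividing by $\omega(Q)^{1/r-1/p}$ and taking the supremum over all cubes, produces exactly $\|f\|_{M^p_r(\omega)}\leq C\|f\|_{WM^p_q(\omega)}$.

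The middle link $\|f\|_{WM^p_q(\omega)}\leq\|f\|_{M^p_q(\omega)}$ holds with constant $1$ and is just Chebyshev's inequality in the weighted measure: for a fixed cube $Q$ and level $\lambda>0$ one has $\lambda^q\,\omega(\{x\in Q:|f(x)|>\lambda\})\leq\int_Q|f|^q\omega$, hence $\lambda\,\omega(\{x\in Q:|f|>\lambda\})^{1/q}\leq(\int_Q|f|^q\omega)^{1/q}$; dividing by $\omega(Q)^{1/q-1/p}$ and taking suprema over $\lambda$ and over $Q$ bounds the weak Morrey norm by the strong one. This is the ``clear'' embedding already recorded just before Proposition~\ref{prop2.1}.

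The third link $\|f\|_{M^p_q(\omega)}\leq C\|f\|_{WM^p_p(\omega)}$ is where I expect to have to be most careful. I would again invoke Proposition~\ref{prop2.1}(1), but now in the \emph{borderline} case: take the pair $(p,p)$ and the exponent $q\in(0,p)$, which yields $\int_Q|f|^q\omega\leq C\,\omega(Q)^{1-q/p}\|f\|^q_{WM^p_p(\omega)}$ for every cube $Q$, after which the same raise-to-$1/q$, divide, and take-supremum manipulation finishes the estimate. The subtlety is that here the two indices in the proposition coincide, so the weight factor $\omega(Q)^{1-q/p}$ with $q$ and $p$ playing the roles of the proposition's $r$ and the equal pair degenerates in the endpoint; one has to confirm that the conclusion of Proposition~\ref{prop2.1}(1) remains valid cube-by-cube in this equal-indices situation, which is precisely what Remark~\ref{rem} supplies. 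Finally, the concluding equality $\|f\|_{WM^p_p(\omega)}=\|f\|_{L^{p,\infty}(\omega)}$ is immediate from the definitions: the normalizing factor $\omega(Q)^{1/p-1/p}=1$, so $\|f\|_{WM^p_p(\omega)}=\sup_Q\sup_{\lambda>0}\lambda\,\omega(\{x\in Q:|f(x)|>\lambda\})^{1/p}$, and exchanging the two suprema while letting the cubes exhaust $\mathbb{R}^n$ (monotone convergence of $\omega(\{x\in Q:|f|>\lambda\})$ up to $\omega(\{x:|f|>\lambda\})$) identifies this with the weighted weak-$L^p$ quasinorm.
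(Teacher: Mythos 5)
Your proof is correct and is exactly the argument the paper intends: the corollary is stated as an immediate consequence of Proposition~\ref{prop2.1}(1), applied once with indices $(q,p)$ and exponent $r$ and once in the endpoint case $(p,p)$ with exponent $q$, combined with the Chebyshev embedding and the observation that the normalizing factor trivializes when $q=p$. Your extra care at the third link (invoking Remark~\ref{rem}) is harmless but not needed, since Proposition~\ref{prop2.1}(1) already allows $q=p$ and its cube-by-cube conclusion is all that is required there.
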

\vskip 0.5cm
\noindent

{\it Proof of Theorem \ref{main1.1}.}
The equivalence of $(1)$ and $(2)$ was proved in \cite{CF}. By Corollary \ref{cor2.1}, it is obvious that $(2)$ implies $(3)$, and $(3)$ implies $(4)$ follows from the trivial embedding properties.

$(4)\Rightarrow (1)$: Let $Q$ be any cube and assuming for a moment that $\int_{Q}\omega(x)^{1-p'}dx<\infty$. We take $f=\omega^{1-p'}\chi_{Q}$. For any $0<\lambda<\omega^{1-p'}(Q)/|Q|$, we obtain that for any $x\in Q$, $Mf(x)>\lambda$. Then
\begin{eqnarray*}
\frac{\lambda\omega(Q)^{1/p}}{|Q|^{1/p}}&=&\frac{\lambda}{|Q|^{1/p}}\frac{1}{\omega(Q)^{1/q-1/p}}\Big(\int_{Q}\omega(x)dx\Big)^{1/q}\\
&=&\frac{1}{|Q|^{1/p}}\frac{\lambda}{\omega(Q)^{1/q-1/p}}\Big(\int_{\{x\in Q: ~Mf(x)>\lambda\}}\omega(x)dx\Big)^{1/q}\\
&\leq&\frac{C}{|Q|^{1/p}}\Big(\int_{\mathbb{R}^{n}}[\omega(x)^{1-p'}\chi_{Q}(x)]^{p}\omega(x)dx\Big)^{1/p}\\
&=&\frac{C}{|Q|^{1/p}}\Big(\int_{Q}\omega(x)^{1-p'}dx\Big)^{1/p}.
\end{eqnarray*}
Hence by arbitrariness of $\lambda$, we get
\begin{equation*}
\frac{1}{|Q|}\int_{Q}\omega(x)dx\leq C\bigg(\frac{1}{|Q|}\int_{Q}\omega(x)^{1-p'}dx\bigg)^{1-p};
\end{equation*}
that is,
\begin{equation}\label{2-2}
\Big(\frac{1}{|Q|}\int_{Q}\omega(x)dx\Big)\Big(\frac{1}{|Q|}\int_{Q}\omega(x)^{1-p'}dx\Big)^{p-1}\leq C.
\end{equation}
If $\omega(x)^{1-p'}$ is not locally integrable, then we take $f=(\omega+\epsilon)^{1-p'}\chi_{Q}$ and $0<\lambda<(\omega+\epsilon)^{1-p'}(Q)/|Q|$ to get
$$\Big(\frac{1}{|Q|}\int_{Q}\omega(x)dx\Big)\Big(\frac{1}{|Q|}\int_{Q}(\omega(x)+\epsilon)^{-p'}\omega(x)dx\Big)^{p-1}\leq C,$$
for all $\epsilon>0$, from which we can still deduce (\ref{2-2}) by letting $\epsilon\rightarrow 0$. So $\omega\in A_{p}$.

In addition, $(1)\Rightarrow (5)$ was proved in \cite{KS}, and it is obvious that $(5)\Rightarrow (6)$. Since $\|\cdot\|_{M^{p}_{q}(\omega)}\leq \|\cdot\|_{L^{p}(\omega)}$ for any weight function $\omega$ and $1<q<p<\infty$, we have $(6)\Rightarrow (4)$. Finally, $(4)\Rightarrow (1)$ can be found above. \qed

\vskip 0.3cm
\noindent

{\it Proof of Theorem \ref{main1.2}.} We only give the proof of $(4)\Rightarrow (1)$, and other implications have been showed in [6], Corollary \ref{cor2.1} or follow from the trivial embedding properties.

$(4)\Rightarrow (1)$: Let $Q$ be any cube and $Q_{1}\subset Q$. If denote $f=\chi_{Q_{1}}$, then for any $0<\lambda<\frac{|Q_{1}|}{|Q|}$, we have
$$Q=\big\{x\in Q:Mf(x)>\lambda\big\}.$$
Applying $M$ is bounded from $L^{1}(\omega)$ to $WM^{1}_{q}(\omega)$, we get
$$\lambda\int_{Q}\omega(x)dx=\lambda\omega(Q)^{1-1/q}\Big(\int_{\{x\in Q: Mf(x)>\lambda\}}\omega(x)dx\Big)^{1/q}\leq C\int_{Q_{1}}\omega(x)dx.$$
Choose $\lambda=\frac{|Q_{1}|}{2|Q|}$, then
$$\frac{1}{|Q|}\int_{Q}\omega(x)dx\leq \frac{C}{|Q_{1}|}\int_{Q_{1}}\omega(x)dx.$$
It follows from Lebesgue differentiation theorem that
$$\frac{1}{|Q|}\int_{Q}\omega(x)dx\leq C\omega(x)\quad a.e.\quad x\in Q,$$
which proves
$$M\omega(x)\leq C\omega(x) \quad a.e. \quad x\in \mathbb{R}^n.$$
Thus $\omega\in A_{1}$. \qed

\vspace{0.3cm}

Sharp bounds for the operators norms in terms of the $A_{p}$ constants of the weights have been investigated as well. Buckley \cite{B} obtained that for $1<p<\infty$,
$$\|M\|_{L^{p}(\omega)\rightarrow L^{p}(\omega)}\leq C[\omega]_{A_{p}}^{1/(p-1)}$$
and the power $[\omega]_{A_{p}}^{1/(p-1)}$ is the best possible. The weak type bound was found and shown to be best possible by
Muckenhoupt \cite{M}, when the proof is examined closely; that is,
\begin{equation}\label{2-3}
\|M\|_{L^{p}(\omega)\rightarrow L^{p,\infty}(\omega)}\leq C[\omega]_{A_{p}}^{1/p}
\end{equation}
and the power $[\omega]_{A_{p}}^{1/p}$ is the best possible. However, only partial results of singular integrals operators are known. The interest in sharp weighted norm for singular integral operators is motivated in part by applications in partial differential
equations. We refer the reader to \cite{ATE}, \cite{DGPP}, \cite{LMP}, \cite{P1}, \cite{P2} and \cite{P3}.

Now, we give the sharp estimate for the boundedness of $M$ from weighted Lebesgue spaces to weighted (weak) Morrey spaces.
\begin{theorem}\label{thm1}
If $1<q<p<\infty$ and $\omega\in A_{p}$, then $\|Mf\|_{M^{p}_{q}(\omega)}\leq C[\omega]_{A_{p}}^{1/p}\|f\|_{L^{p}(\omega)}$ and the power $[\omega]_{A_{p}}^{1/p}$ is best possible.
\end{theorem}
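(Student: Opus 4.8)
The plan is to split the statement into the upper estimate and the sharpness of the exponent, and to handle the upper bound by simply chaining two results already available. For the inequality $\|Mf\|_{M^{p}_{q}(\omega)}\le C[\omega]_{A_{p}}^{1/p}\|f\|_{L^{p}(\omega)}$, I would combine the sharp weak-type bound (\ref{2-3}) with the embedding in Corollary \ref{cor2.1}. Since $1<q<p<\infty$, that corollary gives $\|g\|_{M^{p}_{q}(\omega)}\le C\|g\|_{L^{p,\infty}(\omega)}$, and inspecting the proof of Proposition \ref{prop2.1} shows the constant there is of the form $2\big(r/(q-r)\big)^{1/q}$, hence depends only on $p,q$ and \emph{not} on $\omega$. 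Applying this with $g=Mf$ and then invoking (\ref{2-3}) yields
$$\|Mf\|_{M^{p}_{q}(\omega)}\le C\|Mf\|_{L^{p,\infty}(\omega)}\le C[\omega]_{A_{p}}^{1/p}\|f\|_{L^{p}(\omega)},$$
with $C=C(n,p,q)$ independent of the weight, which is the asserted estimate.

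For the sharpness the key observation is that the Morrey norm admits a lower bound obtained by testing on a single cube: if $Mf\ge\lambda$ on a cube $Q_{0}$, then
$$\|Mf\|_{M^{p}_{q}(\omega)}\ge \frac{1}{\omega(Q_{0})^{1/q-1/p}}\Big(\int_{Q_{0}}(Mf)^{q}\omega\Big)^{1/q}\ge \lambda\,\omega(Q_{0})^{1/p}.$$
This is exactly the lower bound one has for the weak norm $\|Mf\|_{L^{p,\infty}(\omega)}$, so the standard extremizers used to show that (\ref{2-3}) is best possible will serve here as well, provided their level sets contain a genuine cube.

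Concretely, I would work on $\mathbb{R}^{n}$ with the power weight $\omega(x)=|x|^{n(p-1)(1-\delta)}$ for small $\delta\in(0,1)$; a direct computation on balls centered at the origin gives $[\omega]_{A_{p}}\sim\delta^{-(p-1)}$. Taking $Q_{0}=[-1,1]^{n}$, $B=B(0,1)$, and the test function $f=\omega^{1-p'}\chi_{B}=|x|^{-n(1-\delta)}\chi_{B}$, one finds $\|f\|_{L^{p}(\omega)}^{p}=\int_{B}|x|^{-n(1-\delta)}dx\sim\delta^{-1}$, while averaging $f$ over $Q_{0}$ gives $Mf(x)\ge c\,\delta^{-1}$ for every $x\in Q_{0}$. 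Since $\omega(Q_{0})\sim1$, the single-cube lower bound with $\lambda\sim\delta^{-1}$ yields $\|Mf\|_{M^{p}_{q}(\omega)}\gtrsim\delta^{-1}$, whence
$$\frac{\|Mf\|_{M^{p}_{q}(\omega)}}{\|f\|_{L^{p}(\omega)}}\gtrsim \delta^{-1}\cdot\delta^{1/p}=\delta^{-(p-1)/p}\sim[\omega]_{A_{p}}^{1/p}.$$
Letting $\delta\to0^{+}$ shows that no exponent smaller than $1/p$ can hold, so the power $[\omega]_{A_{p}}^{1/p}$ is best possible.

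The upper bound is routine once the weight-independence of the constant in Corollary \ref{cor2.1} is noted. The main obstacle is the sharpness: one must verify that the extremal level set $\{Mf>\lambda\}$ actually contains a full cube $Q_{0}$ of weight $\omega(Q_{0})\sim1$, so that the single-cube lower bound for the Morrey norm captures the same blow-up as the weak-type norm. The power-weight computations are elementary but must be tracked carefully to confirm that the exponents $[\omega]_{A_{p}}\sim\delta^{-(p-1)}$ and $\|f\|_{L^{p}(\omega)}\sim\delta^{-1/p}$ combine to produce precisely $[\omega]_{A_{p}}^{1/p}$.
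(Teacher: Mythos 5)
Your proposal is correct. The upper estimate is handled exactly as in the paper: chain Corollary \ref{cor2.1} with the sharp weak-type bound (\ref{2-3}), and your observation that the embedding constant $2\big(q/(p-q)\big)^{1/p}$ coming from Proposition \ref{prop2.1} is independent of $\omega$ is precisely what makes this legitimate. Where you diverge is the sharpness half. The paper does not use a family of power weights at all: it takes an \emph{arbitrary} $\omega\in A_{p}$, tests $M$ on $f=\omega^{1-p'}\chi_{Q}$ with $\lambda=\omega^{1-p'}(Q)/(2|Q|)$ (so that $Mf>\lambda$ on all of $Q$), and derives the universal lower bound $[\omega]_{A_{p}}\leq 2^{p}\|M\|^{p}_{L^{p}(\omega)\rightarrow M^{p}_{q}(\omega)}$; sharpness then follows since $[\omega]_{A_{p}}^{1/p-\alpha}$ is unbounded over $A_{p}$ for any $\alpha<1/p$. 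Your route instead exhibits the concrete Buckley-type extremizers $\omega(x)=|x|^{n(p-1)(1-\delta)}$, $f=|x|^{-n(1-\delta)}\chi_{B}$, and uses the single-cube lower bound $\|Mf\|_{M^{p}_{q}(\omega)}\geq\lambda\,\omega(Q_{0})^{1/p}$ when $Mf>\lambda$ on $Q_{0}$ — which is the same mechanism the paper exploits, just localized to one explicit weight family. The paper's version is slightly stronger (a matching lower bound for the operator norm of \emph{every} $A_{p}$ weight, with explicit constant $1/2$) and avoids the power-weight computations; yours is self-contained and makes the blow-up rate $\delta^{-(p-1)/p}\sim[\omega]_{A_{p}}^{1/p}$ completely explicit. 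Your exponent bookkeeping checks out: $\|f\|_{L^{p}(\omega)}^{p}\sim\delta^{-1}$, $Mf\gtrsim\delta^{-1}$ on $Q_{0}$, $\omega(Q_{0})\sim 1$, and $[\omega]_{A_{p}}\sim\delta^{-(p-1)}$ from the factor $n-a/(p-1)=n\delta$ in the dual average.
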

\begin{theorem}\label{thm2}
If $1<q<p<\infty$ and $\omega\in A_{p}$, then $\|Mf\|_{WM^{p}_{q}(\omega)}\leq C[\omega]_{A_{p}}^{1/p}\|f\|_{L^{p}(\omega)}$ and the power $[\omega]_{A_{p}}^{1/p}$ is best possible.
\end{theorem}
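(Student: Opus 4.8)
The plan is to split the statement into the upper bound and the sharpness of the exponent, and to reduce the upper bound entirely to facts already available in the excerpt. For the upper bound, recall from Corollary \ref{cor2.1} the embedding $\|g\|_{WM^{p}_{q}(\omega)}\leq C\|g\|_{L^{p,\infty}(\omega)}$, where the constant $C$ depends only on $p$ and $q$ and never on the weight (it is read off from Proposition \ref{prop2.1}, whose constant involves only $p,q,r$). Applying this with $g=Mf$ and then the sharp weak-type estimate (\ref{2-3}) of Muckenhoupt, one obtains at once
$$\|Mf\|_{WM^{p}_{q}(\omega)}\leq C\|Mf\|_{L^{p,\infty}(\omega)}\leq C[\omega]_{A_{p}}^{1/p}\|f\|_{L^{p}(\omega)},$$
which is the asserted bound. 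Thus the first half costs nothing beyond the embedding and the known weak-type inequality.

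For the sharpness I would exhibit a one-parameter family of power weights along which the ratio forces the exponent down to $1/p$. Working on $\mathbb{R}^n$, set $\omega_{\delta}(x)=|x|^{(1-\delta)n(p-1)}$ for small $\delta>0$, so that $\omega_\delta\in A_p$, and test against $f_{\delta}=\omega_{\delta}^{-1/(p-1)}\chi_{B}$ with $B=B(0,1)$. Three quantities must be tracked: (i) the $A_p$ constant, which a direct computation gives as $[\omega_\delta]_{A_p}\sim \delta^{-(p-1)}$, since $n(p-1)-(1-\delta)n(p-1)=\delta n(p-1)$; (ii) the norm $\|f_\delta\|_{L^p(\omega_\delta)}$, which reduces to $\int_B |x|^{-(1-\delta)n}\,dx\sim \delta^{-1}$, hence $\|f_\delta\|_{L^p(\omega_\delta)}\sim\delta^{-1/p}$; and (iii) a lower bound for $\|Mf_\delta\|_{WM^p_q(\omega_\delta)}$.

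The heart of the matter, and the step I expect to be the real obstacle, is (iii), because the weak Morrey quasi-norm carries the extra normalization $\omega(Q)^{-(1/q-1/p)}$ together with a supremum over both cubes $Q$ and levels $\lambda$, and one must verify that this normalization does not wash out the blow-up. My plan is to use the pointwise lower bound $Mf_\delta(x)\gtrsim \delta^{-1}|x|^{-n}$ for $|x|>1$, obtained by averaging over $B(0,2|x|)$, and then to test the quasi-norm on the single fixed ball $Q=B(0,2)$ at the level $\lambda\sim \delta^{-1}2^{-n}$. On that ball the relevant level set contains the annulus $\{1<|x|<2\}$, whose $\omega_\delta$-measure, together with $\omega_\delta(Q)$, stays bounded above and away from zero uniformly in small $\delta$ (the exponent $(1-\delta)n(p-1)+n$ tends to $np$), so the normalization contributes only a harmless constant. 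This yields $\|Mf_\delta\|_{WM^p_q(\omega_\delta)}\gtrsim \delta^{-1}$.

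Combining the three estimates finishes the argument: if the theorem held with some exponent $\alpha$ in place of $1/p$, then $\delta^{-1}\lesssim \delta^{-(p-1)\alpha}\,\delta^{-1/p}$ as $\delta\to0^{+}$, which forces $(p-1)\alpha+1/p\geq 1$, i.e. $\alpha\geq 1/p$. Hence the power $[\omega]_{A_p}^{1/p}$ cannot be lowered and is best possible.
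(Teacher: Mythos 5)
Your proof is correct, and the upper-bound half coincides exactly with the paper's: both pass through the embedding $\|\cdot\|_{WM^p_q(\omega)}\leq C\|\cdot\|_{L^{p,\infty}(\omega)}$ of Corollary \ref{cor2.1} (whose constant indeed depends only on $p,q$) and Muckenhoupt's sharp weak-type bound (\ref{2-3}). Where you genuinely diverge is in the sharpness. The paper (which treats Theorem \ref{thm2} as a variant of Theorem \ref{thm1}) proves a \emph{universal} lower bound: for every $\omega\in A_p$ and every cube $Q$ it tests $M$ against $f=\omega^{1-p'}\chi_Q$ at the level $\lambda=\omega^{1-p'}(Q)/(2|Q|)$, for which $\{x\in Q:\ Mf(x)>\lambda\}=Q$, and deduces $[\omega]_{A_p}\leq 2^p\|M\|^p_{L^p(\omega)\rightarrow WM^p_q(\omega)}$; sharpness then follows because $[\omega]_{A_p}$ is unbounded over $A_p$. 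You instead build an explicit extremizing family $\omega_\delta(x)=|x|^{(1-\delta)n(p-1)}$ with $[\omega_\delta]_{A_p}\sim\delta^{-(p-1)}$ and track the three quantities by hand; your computations check out (the annulus $\{1<|x|<2\}$ does carry $\omega_\delta$-mass bounded away from $0$ and $\infty$ uniformly in small $\delta$, so the Morrey normalization is harmless, and the exponent arithmetic $(p-1)\alpha+1/p\geq 1$ gives $\alpha\geq 1/p$). The trade-off: the paper's argument is shorter and yields the stronger pointwise comparability $\|M\|_{L^p(\omega)\rightarrow WM^p_q(\omega)}\approx[\omega]_{A_p}^{1/p}$ for \emph{every} weight, while yours is the classical example-based route, which is more self-contained (it does not presuppose that weights of arbitrarily large $A_p$ constant exist --- it constructs them) at the cost of a page of power-weight bookkeeping. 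Two cosmetic points: the paper's $M$ and Morrey norms are defined over cubes, so your ball averages and the test set $B(0,2)$ should be replaced by, or enclosed in, comparable cubes; and it is worth stating explicitly that the constant in Corollary \ref{cor2.1} is weight-independent, since that is what makes the reduction of the upper bound legitimate.
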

We focus on the proof of Theorem \ref{thm1}. For the proof of Theorem \ref{thm2} similar arguments are applied
with necessary modifications, we omit the details.

\vspace{0.3cm}

{\it Proof of Theorem \ref{thm1}.}
On the one hand, by Corollary \ref{cor2.1} and the inequalities (\ref{2-3}), we have
$$\|Mf\|_{M^{p}_{q}(\omega)}\leq C\|Mf\|_{L^{p,\infty}(\omega)}\leq C[\omega]^{1/p}_{A_{p}}\|f\|_{L^{p}(\omega)}.$$
On the other hand, let $Q$ be any cube, take $f=\omega^{1-p'}\chi_{Q}$ and $\lambda=\frac{\omega^{1-p'}(Q)}{2|Q|}$ (If $\omega(x)^{1-p'}$ is not locally integrable, then we take $f=(\omega+\epsilon)^{-p'/p}\chi_{Q}$ and $\lambda=\frac{(\omega+\epsilon)^{1-p'}(Q)}{2|Q|}$ ). Since
$$\{x: x\in Q\}=\{x\in Q: ~Mf(x)>\lambda\},$$
then
\begin{eqnarray*}
\frac{\lambda\omega(Q)^{1/p}}{|Q|^{1/p}}&=&\frac{\lambda}{|Q|^{1/p}}\frac{1}{\omega(Q)^{1/q-1/p}}\Big(\int_{Q}\omega(x)dx\Big)^{1/q}\\
&=&\frac{1}{|Q|^{1/p}}\frac{\lambda}{\omega(Q)^{1/q-1/p}}\Big(\int_{\{x\in Q: ~Mf(x)>\lambda\}}\omega(x)dx\Big)^{1/q}\\
&\leq&\frac{1}{|Q|^{1/p}}\frac{1}{\omega(Q)^{1/q-1/p}}\Big(\int_{Q}|Mf(x)|^{q}\omega(x)dx\Big)^{1/q}\\
&\leq&\frac{\|M\|_{L^{p}(\omega)\rightarrow M^{p}_{q}(\omega)}}{|Q|^{1/p}}\Big(\int_{\mathbb{R}^{n}}[\omega(x)^{1-p'}\chi_{Q}(x)]^{p}\omega(x)dx\Big)^{1/p}\\
&=&\frac{\|M\|_{L^{p}(\omega)\rightarrow M^{p}_{q}(\omega)}}{|Q|^{1/p}}\Big(\int_{Q}\omega(x)^{1-p'}dx\Big)^{1/p},
\end{eqnarray*}
which implies that
$$[\omega]_{A_{p}}\leq 2^{p}\|M\|^{p}_{L^{p}(\omega)\rightarrow M^{p}_{q}(\omega)}$$
and Theorem \ref{thm1} follows. \qed

\vskip 0.3cm
\noindent

Similar to Theorem \ref{main1.1} and Theorem \ref{main1.2}, we only prove $(4)\Rightarrow (1)$ of Theorem \ref{main1.3} and Theorem \ref{main1.4}. In fact, other implications have been showed in \cite{CF} and \cite{KS} or follow from the trivial embedding properties.

\vskip 0.3cm
\noindent

{\it Proof of Theorem \ref{main1.3} and Theorem \ref{main1.4}.} $(4)\Rightarrow (1)$: We prove the $n$th dimensional case when $n\geq 2$. The one-dimensional case is obtained by a simple adaptation of the following argument.

Let $Q$ be a cube and let $f$ be a nonnegative function on $\mathbb{R}^n$ supported in $Q$ that satisfies $f_{Q}>0$. Let $Q'$ be the cube that shares a corner with $Q$, which has the same length as $Q$ with $x_{j}\geq y_{j}$ for all $x\in Q'$ and $y\in Q$. Then for $x\in Q'$ and $y\in Q$, we have $\sum_{j=1}^{n}x_{j}-y_{j}\geq |x-y|$ and $|x-y|^{-n}\geq C|Q|^{-1}$, which implies that for $x\in Q'$,
$$\Big|\sum_{j=1}^{n}R_{j}f(x)\Big|=c_{n}\Big|\sum_{j=1}^{n}\int_{\mathbb{R}^n}\frac{x_{j}-y_{j}}{|x-y|^{n+1}}f(y)dy\Big|\geq C\int_{Q}\frac{f(y)}{|x-y|^n}\geq Cf_{Q}.$$
For all $0<\lambda<Cf_{Q}$ we have
$$Q'=\big\{x\in Q':\Big|\sum_{j=1}^{n}R_{j}f(x)\Big|>\lambda\big\}.$$

Since the operator $\sum_{j=1}^{n}R_{j}$ is bounded from $L^{p}(\omega)$ to $WM^{p}_{q}(\omega)$ for $p\geq 1$, then
$$\omega(Q')\leq \frac{C}{\lambda^{p}}\int_{Q}f(x)^{p}\omega(x) dx,$$
for all $0<\lambda<Cf_{Q}$, it proves that
\begin{equation}\label{1.3.1}
f_{Q}^{p}\leq \frac{C}{\omega(Q')}\int_{Q}f(x)^{p}\omega(x)dx.
\end{equation}
We observe that we can reverse the roles of $Q$ and $Q'$ and obtain
\begin{equation}\label{1.3.2}
g_{Q'}^{p}\leq \frac{C}{\omega(Q)}\int_{Q'}g(x)^{p}\omega(x)dx,
\end{equation}
for all $g$ supported in $Q'$. Taking $g=\chi_{Q'}$ in (\ref{1.3.2}), which gives that
$$\omega(Q)\leq C\omega(Q').$$
Using this estimate and (\ref{1.3.2}), we obtain
\begin{equation}\label{1.3.3}
f_{Q}^{p}\leq \frac{C}{\omega(Q)}\int_{Q}f(x)^{p}\omega(x)dx.
\end{equation}

{\bf Case 1:} $p>1$. Taking $f=(\omega+\epsilon)^{-p'/p}\chi_{Q}$ in (\ref{1.3.3}), we have
$$\Big(\frac{1}{|Q|}\int_{Q}\omega(x)dx\Big)\Big(\frac{1}{|Q|}\int_{Q}(\omega(x)+\epsilon)^{1-p'}dx\Big)^{p}
\Big(\frac{1}{|Q|}\int_{Q}(\omega(x)+\epsilon)^{-p'}\omega(x)dx\Big)^{-1}\leq C.$$
Letting $\epsilon\rightarrow 0$ and it follows that $\omega\in A_{p}$.

{\bf Case 2:} $p=1$. For any $Q_{1}\subset Q$ and denote $f=\chi_{Q_{1}}$. By (\ref{1.3.3}),
$$\frac{1}{|Q|}\int_{Q}\omega(x)dx\leq \frac{C}{|Q_{1}|}\int_{Q_{1}}\omega(x)dx.$$
Applying Lebesgue differentiation theorem that
$$\frac{1}{|Q|}\int_{Q}\omega(x)dx\leq C\omega(x)\quad a.e.\quad x\in Q.$$
Thus $\omega\in A_{1}$. \qed

\vskip 0.3cm
\noindent

{\it Proof of Theorem \ref{main1.5}.} We need only to show that $(5)\Rightarrow (1)$. In fact, $(1)\Rightarrow (2)$ has been proved in \cite[page 198]{S} and $(1)\Rightarrow (6)$ can be obtained by the fact that $|T_{\epsilon}f|\leq Mf$(see \cite[page 57]{S}) and the boundedness of $M$ on weighted Morrey spaces,  other implications follow from the trivial embedding properties.

$(5)\Rightarrow (1)$: We first assume that $\Phi_{\epsilon}=|B_{\epsilon}|^{-1}\chi_{B_{\epsilon}}$. Let $B:=B(x_{0},\delta)$ be any ball and let $f\geq 0$ supported in $B$. If $x\in B$, then $B\subset R:=B(x,2\delta)$. Thus if $x\in B$ and $\epsilon=2\delta$, we have
\begin{eqnarray*}
T_{\epsilon}f(x)=f\ast\Phi_{\epsilon}(x)&=&\int_{\mathbb{R}^n}f(y)\Phi_{\epsilon}(x-y)dy\\
&=&\int_{\mathbb{R}^n}f(y)|B_{\epsilon}|^{-1}\chi_{B_{\epsilon}}(x-y)dy\\
&=&|B_{\epsilon}|^{-1}\int_{|x-y|<2\delta}f(y)dy\\
&\geq&2^{-n}\frac{1}{|B|}\int_{B}f(y)dy=2^{-n}f_{B}.
\end{eqnarray*}
For any $0<\lambda<2^{-n}f_{B}$, by the fact that $T_{\epsilon}$ is a bounded operator from $L^{p}(\omega)$ to $WM^{p}_{q}(\omega)$, we obtain
\begin{eqnarray*}
C\bigg(\int_{\mathbb{R}^n}f(x)^{p}\omega(x)dx\bigg)^{1/p}&\geq& \omega(B)^{1/p-1/q}\sup_{\lambda>0}\lambda\bigg(\int_{\{x\in B: \ T_{\epsilon}f(x)>\lambda\}}\omega(x)dx\bigg)^{1/q}\\
&\geq&\omega(B)^{1/p}\cdot2^{-n}f_{B}.
\end{eqnarray*}
Taking $f=(\omega+\epsilon)^{-p'/p}\chi_{B}$, the above inequality implies that
$$\Big(\frac{1}{|Q|}\int_{Q}\omega(x)dx\Big)\Big(\frac{1}{|Q|}\int_{Q}(\omega(x)+\epsilon)^{1-p'}dx\Big)^{p}
\Big(\frac{1}{|Q|}\int_{Q}(\omega(x)+\epsilon)^{-p'}\omega(x)dx\Big)^{-1}\leq C.$$
Letting $\epsilon\rightarrow 0$ and it follows that $\omega\in A_{p}$.

For a general $\Phi\in \mathcal{R}$, there exist some positive constants $c_{1}$ and $c_{2}$ such that $\Phi(x)\geq c_{1}\chi_{B_{c_{2}}}(x)$ (see \cite[page 199]{S}), then the same conclusion follows.
\qed

\vspace{0.5cm}

\section{Proof of Theorem \ref{main1.6} $\sim$ Theorem \ref{main1.8}}

Below we shall turn to discuss the properties of weighted {\rm BMO} spaces.

\begin{propo}\label{prop3.1}
Let $1\leq p\leq \infty$, $1/p+1/p'=1$ and $\omega\in A_{p}$. If $f\in {\rm BMO}^{p'}(\omega)$, then $|f|\in {\rm BMO}^{p'}(\omega)$. Moreover, $$\big\||f|\big\|_{{\rm BMO}^{p'}(\omega)}\leq (1+[\omega]_{A_{p}}^{1/p})\|f\|_{{\rm BMO}^{p'}(\omega)}.$$
\end{propo}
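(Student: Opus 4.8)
The plan is to estimate, for each fixed cube $Q$, the weighted local oscillation of $|f|$ and then take the supremum over $Q$. Write $N_Q(g) := \big(\frac{1}{\omega(Q)}\int_Q |g(y)|^{p'}\omega(y)^{1-p'}\,dy\big)^{1/p'}$, which for fixed $Q$ is a seminorm in $g$ (a weighted $L^{p'}$ norm), so the ordinary triangle inequality applies. Since by definition $\||f|\|_{\mathrm{BMO}^{p'}(\omega)} = \sup_Q N_Q\big(|f| - (|f|)_Q\big)$, where $(|f|)_Q = \frac{1}{|Q|}\int_Q |f|$, I would split $|f| - (|f|)_Q = \big(|f| - |f_Q|\big) + \big(|f_Q| - (|f|)_Q\big)$ and bound the two resulting terms separately, the first being a genuine oscillation and the second a constant on $Q$.

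For the first term the pointwise inequality $\big||f(y)| - |f_Q|\big| \le |f(y) - f_Q|$ gives $N_Q(|f| - |f_Q|) \le N_Q(f - f_Q) \le \|f\|_{\mathrm{BMO}^{p'}(\omega)}$ immediately, making no use of the $A_p$ hypothesis.

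The second term is the heart of the matter. Here $c := |f_Q| - (|f|)_Q$ is a constant on $Q$, and since $|f_Q| \le (|f|)_Q$ we have $|c| = (|f|)_Q - |f_Q| \le \frac{1}{|Q|}\int_Q |f - f_Q|\,dy$. I would control this $L^1$-average by the weighted $L^{p'}$ norm through H\"older's inequality with exponents $p'$ and $p$, inserting the factors $\omega^{(1-p')/p'}\cdot\omega^{(p'-1)/p'}$; the conjugate-exponent arithmetic $(p'-1)p/p' = 1$ collapses the second H\"older integral to exactly $\omega(Q)$, yielding $|c| \le \frac{\omega(Q)}{|Q|}\,\|f\|_{\mathrm{BMO}^{p'}(\omega)}$. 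Since $N_Q(c) = |c|\,\big(\frac{1}{\omega(Q)}\int_Q\omega^{1-p'}\big)^{1/p'}$, multiplying the factors out and using $1/p+1/p'=1$ gives
$$N_Q(c) \le \Big(\tfrac{1}{|Q|}\int_Q\omega\Big)^{1/p}\Big(\tfrac{1}{|Q|}\int_Q\omega^{1-p'}\Big)^{1/p'}\|f\|_{\mathrm{BMO}^{p'}(\omega)}.$$
Recognizing $p-1 = p/p'$ and $\omega^{1-p'}=\omega^{-1/(p-1)}$, the leading factor equals $\big[(\frac{1}{|Q|}\int_Q\omega)(\frac{1}{|Q|}\int_Q\omega^{1-p'})^{p-1}\big]^{1/p} \le [\omega]_{A_p}^{1/p}$ by the $A_p$ condition. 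Adding the two estimates and taking the supremum over $Q$ produces the claimed bound $(1 + [\omega]_{A_p}^{1/p})\|f\|_{\mathrm{BMO}^{p'}(\omega)}$.

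I expect the only real obstacle to be the bookkeeping in the constant term: choosing the split of the weight in H\"older's inequality so that the exponents conjugate exactly to $1$, and then matching the resulting product of averages to $[\omega]_{A_p}^{1/p}$. The endpoint cases are lighter: for $p=\infty$ (so $p'=1$ and $\omega\in A_\infty$) the seminorm $N_Q$ is just the unweighted average $\frac{1}{\omega(Q)}\int_Q|\cdot|\,\omega\,dy$ and the same decomposition works, while for $p=1$ (so $p'=\infty$) one argues with the essential-supremum definition of $\mathrm{BMO}^{\infty}(\omega)$ together with the $A_1$ condition in place of H\"older; I would note that both follow by the same scheme with minor modifications.
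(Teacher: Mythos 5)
Your proof follows essentially the same route as the paper's: the same split of $|f|-(|f|)_Q$ into the oscillation term $|f|-|f_Q|$ and the constant $|f_Q|-(|f|)_Q$, the same H\"older step bounding that constant by $\frac{\omega(Q)}{|Q|}\|f\|_{\mathrm{BMO}^{p'}(\omega)}$, and the same identification of the resulting product of averages with $[\omega]_{A_p}^{1/p}$, with the endpoints $p=1$ and $p=\infty$ handled by the analogous scheme. One cosmetic slip: for $p=\infty$ (so $p'=1$) the seminorm is $\frac{1}{\omega(Q)}\int_Q|\cdot|\,dy$ with no extra weight factor in the integrand, but this does not affect the argument.
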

\begin{proof}
Let $f\in {\rm BMO}^{p'}(\omega)$.

{\bf Case 1:} $p=1$. For any cube $x\in Q$,
\begin{eqnarray*}
\big||f(x)|-(|f|)_{Q}\big|&\leq& \big||f(x)|-f_{Q}\big|+\big|f_{Q}-(|f|)_{Q}\big|\\
&\leq&\big|f(x)-f_{Q}\big|+\frac{1}{|Q|}\int_{Q}|f(y)-f_{Q}|dy\\
&\leq&\big|f(x)-f_{Q}\big|+M\omega(x)\|f\|_{{\rm BMO}^{\infty}(\omega)}.
\end{eqnarray*}
It follows from $\omega\in A_{1}$ that
$$M\omega(x)\leq [\omega]_{A_{1}}\omega(x) \quad a.e. \quad x\in \mathbb{R}^n.$$
Then
$$\big\||f|\big\|_{{\rm BMO}^{\infty}(\omega)}\leq (1+[\omega]_{A_{1}})\|f\|_{{\rm BMO}^{\infty}(\omega)}.$$

{\bf Case 2:} $1< p<\infty$. Since $\omega\in A_{p}$, we have
\begin{eqnarray*}
\big||f(x)|-(|f|)_{Q}\big|&\leq& \big||f(x)|-|f_{Q}|\big|+\big||f_{Q}|-(|f|)_{Q}\big|\\
&\leq&\big|f(x)-f_{Q}\big|+\frac{1}{|Q|}\int_{Q}|f(y)-f_{Q}|dy\\
&\leq&\big|f(x)-f_{Q}\big|+\frac{1}{|Q|}\Big(\int_{Q}|f(y)-f_{Q}|^{p'}\omega(y)^{1-p'}dy\Big)^{1/p'}\omega(Q)^{1/p}\\
&\leq&\big|f(x)-f_{Q}\big|+\frac{\omega(Q)}{|Q|}\|f\|_{{\rm BMO}^{p'}(\omega)},
\end{eqnarray*}
which yields that
\begin{eqnarray*}
&&\bigg(\frac{1}{\omega(Q)}\int_{Q}\big||f(x)|-(|f|)_{Q}\big|^{p'}\omega(x)^{1-p'}dx\bigg)^{1/p'}\\
&&\leq \bigg(\frac{1}{\omega(Q)}\int_{Q}|f(x)-f_{Q}\big|^{p'}\omega(x)^{1-p'}dx\bigg)^{1/p'}\\
&&\quad+\bigg(\frac{1}{\omega(Q)}\int_{Q}\Big(\frac{\omega(Q)}{|Q|}\|f\|_{{\rm BMO}^{p}(\omega)}\Big)^{p'}\omega(x)^{1-p'}dx\bigg)^{1/p'}\\
&&\leq \|f\|_{{\rm BMO}^{p'}(\omega)}+\|f\|_{{\rm BMO}^{p'}(\omega)}\bigg(\Big(\frac{1}{|Q|}\int_{Q}\omega(x)dx\Big)\Big(\frac{1}{|Q|}\int_{Q}\omega(x)^{1-p'}dx\Big)^{p-1}\bigg)^{1/p}\\
&&\leq (1+[\omega]_{A_{p}}^{1/p})\|f\|_{{\rm BMO}^{p'}(\omega)}.
\end{eqnarray*}

{\bf Case 3:} $p=\infty$. For any cube $Q$,
\begin{eqnarray*}
\big||f(x)|-(|f|)_{Q}\big|&\leq& \big||f(x)|-|f_{Q}|\big|+\big||f_{Q}|-(|f|)_{Q}\big|\\
&\leq&\big|f(x)-f_{Q}\big|+\frac{1}{|Q|}\int_{Q}|f(y)-f_{Q}|dy\\
&\leq&\big|f(x)-f_{Q}\big|+\frac{\omega(Q)}{|Q|}\|f\|_{{\rm BMO}(\omega)}.
\end{eqnarray*}

It is easy to see that
$$\frac{1}{\omega(Q)}\int_{Q}\big||f(x)|-(|f|)_{Q}\big|dx\leq 2\|f\|_{{\rm BMO}(\omega)}.$$

Combining the estimates above, we conclude that
$$\big\||f|\big\|_{{\rm BMO}^{p'}(\omega)}\leq (1+[\omega]_{A_{p}}^{1/p})\|f\|_{{\rm BMO}^{p'}(\omega)}.$$
Thus, we complete the proof of Proposition \ref{prop3.1}.
\end{proof}

\vspace{0.3cm}

{\it Proof of Theorem \ref{main1.5}.}
On the one hand, we should prove that $\omega\in A_{p}$ implies $\omega\in \rm{BMO}^{p'}(\omega)$.

\textbf{Case 1}: $p=1$.
Given a cube $Q$, it follows from the $A_{1}$ condition that
$$|\omega(x)-\omega_{Q}|\leq \omega(x)+\omega_{Q}\leq \big(1+[\omega]_{A_{1}}\big)\omega(x), ~~a.e. ~x\in Q,$$
which gives
$$\mathop\mathrm{ess~sup}_{x\in Q}\frac{|\omega(x)-\omega_{Q}|}{w(x)}\leq 1+[\omega]_{A_{1}}.$$

\textbf{Case 2}: $1<p<\infty$. For any cube $Q$,  we use Minkowski¡¯s inequality to establish
\begin{eqnarray*}
&&\bigg(\frac{1}{\omega(Q)}\int_{Q}|\omega(x)-\omega_{Q}|^{p'}\omega(x)^{1-p'}dx\bigg)^{1/p'}\\
&&\leq \frac{1}{\omega(Q)^{1/p'}}\bigg(\int_{Q}\omega(x)^{p'}\omega(x)^{1-p'}dx\bigg)^{1/p'}
+\frac{\omega_{Q}}{\omega(Q)^{1/p'}}\bigg(\int_{Q}\omega(x)^{1-p'}dx\bigg)^{1/p'}\\
&&:=I_{1}+I_{2}.
\end{eqnarray*}
It is easy to see that $I_{1}=1$. For $I_{2}$, $\omega\in A_{p}$ implies that
\begin{eqnarray*}
I_{2}&=&\frac{1}{|Q|}\bigg(\int_{Q}\omega(x)dx\bigg)^{1-1/p'}\bigg(\int_{Q}\omega(x)^{1-p'}dx\bigg)^{1/p'}\\
&=&\bigg(\frac{1}{|Q|}\int_{Q}\omega(x)dx\bigg)^{1/p}\bigg(\frac{1}{|Q|}\int_{Q}\omega(x)^{1-p'}dx\bigg)^{1/p'}\\
&\leq&[\omega]^{1/p}_{A_{p}}.
\end{eqnarray*}
Therefore $\omega\in \mathrm{BMO}^{p'}(\omega)$ and $\|\omega\|_{\mathrm{BMO}^{p'}(\omega)}\leq 1+[\omega]^{1/p}_{A_{p}}$.

On the other hand, the condition $\omega\in A_{p}$ turns out to be necessary for the conclusion $\omega\in \mathrm{BMO}^{p'}(\omega)$ holds.

\textbf{Case 1}: $p=1$. Given a cube $Q$, it follows from $\omega\in \mathrm{BMO}^{\infty}(\omega)$ that
$$\omega_{Q}\leq |\omega(x)-\omega_{Q}|+\omega(x)\leq \big(1+\|\omega\|_{\mathrm{BMO}^{\infty}(\omega)}\big)\omega(x),  ~~a.e.~~x\in Q,$$
then, $M(\omega)(x)\leq \big(1+\|\omega\|_{\mathrm{BMO}^{\infty}(\omega)}\big)\omega(x),a.e. x\in \mathbb{R}^{n}$; that is, $\omega\in A_{1}$.

\textbf{Case 2}: $1<p< \infty$. For any fixed cube $Q$, using Minkowski's inequality,
\begin{eqnarray*}
\|\omega\|_{\mathrm{BMO}^{p'}(\omega)}&\geq&\bigg(\frac{1}{w(Q)}\int_{Q}|\omega(y)-\omega_{Q}|^{p'}\omega(y)^{1-p'}dy\bigg)^{1/p'}\\
&\geq&\bigg(\frac{1}{w(Q)}\int_{Q}\omega_{Q}^{p'}\omega(y)^{1-p'}dy\bigg)^{1/p'}- \bigg(\frac{1}{w(Q)}\int_{Q}\omega(y)^{p'}\omega(y)^{1-p'}dy\bigg)^{1/p'}\\
&=&\bigg(\frac{1}{|Q|}\int_{Q}\omega(x)dx\bigg)^{1/p}\bigg(\frac{1}{|Q|}\int_{Q}\omega(x)^{1-p'}dx\bigg)^{1/p'}-1.
\end{eqnarray*}
Thus, we conclude that $\omega\in A_{p}$ and $[\omega]_{A_{p}}\leq \big(\|\omega\|_{\mathrm{BMO}^{p'}(\omega)}+1\big)^{p}$.

Hence, it immediately follows that $\omega\in A_{p}$ if and only if $\omega\in \mathrm{BMO}^{p'}(\omega)$ with
$$[\omega]^{1/p}_{A_{p}}\leq \|\omega\|_{\mathrm{BMO}^{p'}(\omega)}+1\leq [\omega]^{1/p}_{A_{p}}+2.$$
It is easy to see that for $1\leq p<\infty$, $[\omega]_{A_{p}}\geq 1$. Thus, the proof of Theorem \ref{main1.5} is completed. \qed

\vskip 0.5cm
\noindent

{\it Proof of Theorem \ref{main1.7}.} For any $\omega\in A_{\infty}$, there exists $1\leq p_{0}<\infty$ such that $\omega\in A_{p_{0}}$. Applying Theorem \ref{main1.6}, we have
$$\omega\in \mathrm{BMO}^{p'_{0}}(\omega)\subset \mathrm{BMO}(\omega).$$
We finish the proof of Theorem \ref{main1.7}. \qed

\vspace{0.5cm}
\begin{remark}
For the case $p=\infty$. For any non-negative, locally integrable function $\omega$,
$$\|\omega\|_{\mathrm{BMO}(\omega)}=\sup_{Q}\frac{1}{\omega(Q)}\int_{Q}|\omega(x)-\omega_{Q}|dx\leq 2.$$
Then $\omega\in A_{\infty}$ is not necessary for the conclusion $\omega\in \mathrm{BMO}(\omega)$ holds.
\end{remark}

\vspace{0.5cm}

Indeed, the proof we just gave and Proposition \ref{prop3.1} lead to the following result.
\begin{theorem}
Let $1\leq p< \infty$, $\frac{1}{p} + \frac{1}{p'}= 1$ and $f\in L_{\rm loc}(\mathbb{R}^n)$. $|f|\in A_{p}$ if and only if $f\in \mathrm{BMO}^{p'}(|f|)$.
\end{theorem}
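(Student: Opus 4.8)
The plan is to deduce this theorem directly from Theorem~\ref{main1.6} together with Proposition~\ref{prop3.1}, treating $|f|$ as the weight $\omega$ in those earlier results. Set $\omega = |f|$, which is a nonnegative locally integrable function on $\mathbb{R}^n$, exactly the class of objects to which Theorem~\ref{main1.6} applies. The goal is the equivalence $|f| \in A_p \iff f \in \mathrm{BMO}^{p'}(|f|)$, and the natural bridge is the intermediate statement $f \in \mathrm{BMO}^{p'}(|f|) \iff |f| \in \mathrm{BMO}^{p'}(|f|)$, since Theorem~\ref{main1.6} already supplies $|f| \in A_p \iff |f| \in \mathrm{BMO}^{p'}(|f|)$.

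First I would prove the forward direction. Assume $|f| \in A_p$. Then by Theorem~\ref{main1.6} (applied with $\omega = |f|$) we have $|f| \in \mathrm{BMO}^{p'}(|f|)$, so in particular $|f| \in L^1_{\mathrm{loc}}$ and the weighted $\mathrm{BMO}^{p'}(|f|)$ norm of $|f|$ is finite. But the quantity that controls $f \in \mathrm{BMO}^{p'}(|f|)$ and the quantity that controls $|f| \in \mathrm{BMO}^{p'}(|f|)$ are governed by Proposition~\ref{prop3.1}: that proposition states that whenever $\omega \in A_p$ and $g \in \mathrm{BMO}^{p'}(\omega)$, one has $\big\||g|\big\|_{\mathrm{BMO}^{p'}(\omega)} \leq (1+[\omega]_{A_p}^{1/p})\|g\|_{\mathrm{BMO}^{p'}(\omega)}$. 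The subtlety is that I need the \emph{reverse} comparison, controlling $\|f\|$ by $\big\||f|\big\|$; however the forward direction only requires showing $f \in \mathrm{BMO}^{p'}(|f|)$, and here the cleaner route is simply to observe that $|f| \in \mathrm{BMO}^{p'}(|f|)$ is itself a statement about the function $|f|$, so taking $g=|f|$ (for which $|g|=|f|=g$) is vacuous. The honest deduction is therefore: $|f|\in A_p$ gives $|f|\in \mathrm{BMO}^{p'}(|f|)$ by Theorem~\ref{main1.6}; then one checks that $f$ and $|f|$ have comparable oscillation, which I address below.

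For the equivalence $f \in \mathrm{BMO}^{p'}(|f|) \iff |f| \in \mathrm{BMO}^{p'}(|f|)$, the direction $f \in \mathrm{BMO}^{p'}(|f|) \Rightarrow |f| \in \mathrm{BMO}^{p'}(|f|)$ is exactly Proposition~\ref{prop3.1} applied with $\omega = |f|$ and the test function being $f$ itself, \emph{provided} $|f| \in A_p$; but since we are trying to prove the $A_p$ membership, I must be careful about circularity. The clean way to organize the argument is: for the converse direction of the theorem, assume $f \in \mathrm{BMO}^{p'}(|f|)$. The pointwise inequality $\big||f(x)| - (|f|)_Q\big| \le |f(x) - f_Q| + \big|f_Q - (|f|)_Q\big|$ together with $\big|f_Q - (|f|)_Q\big| \le \frac{1}{|Q|}\int_Q |f(y)-f_Q|\,dy$ — the same elementary estimates carried out in the proof of Proposition~\ref{prop3.1} — bounds the $\mathrm{BMO}^{p'}(|f|)$ oscillation of $|f|$ by that of $f$ \emph{without} presupposing $A_p$; then $|f| \in \mathrm{BMO}^{p'}(|f|)$, and Theorem~\ref{main1.6} gives $|f| \in A_p$. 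The forward direction is symmetric once $A_p$ is available, and the reverse oscillation bound for $f$ in terms of $|f|$ goes through Proposition~\ref{prop3.1} with the now-established $A_p$ constant.

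The main obstacle is the potential circularity between needing $|f| \in A_p$ to invoke Proposition~\ref{prop3.1} and wanting to \emph{conclude} $|f| \in A_p$. I expect the resolution to lie in the observation, already implicit in the proof of Proposition~\ref{prop3.1}, that the step bounding $\big|f_Q - (|f|)_Q\big|$ by the mean oscillation of $f$ uses only Hölder's inequality and the definition of the weighted norm, not the full $A_p$ hypothesis; the $A_p$ constant enters only in the final numerical bound, not in establishing finiteness of the norm. Thus the equivalence $f \in \mathrm{BMO}^{p'}(|f|) \iff |f| \in \mathrm{BMO}^{p'}(|f|)$ holds as a statement about finiteness of norms unconditionally, and chaining it with Theorem~\ref{main1.6} closes the argument. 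I would present the $p=1$ and $1<p<\infty$ cases following the case division of Proposition~\ref{prop3.1}, since the estimates differ slightly in form.
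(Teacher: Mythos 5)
Your plan follows the paper's one-line indication (the paper gives no detailed proof, merely asserting the theorem follows from the proof of Theorem \ref{main1.6} and Proposition \ref{prop3.1}), but the step on which everything hinges — the claim that $f\in \mathrm{BMO}^{p'}(|f|)\iff |f|\in \mathrm{BMO}^{p'}(|f|)$ holds \emph{unconditionally}, without presupposing $|f|\in A_{p}$ — is not established by your argument, and your stated reason for it is incorrect. Write $\omega=|f|$. The elementary estimate you invoke gives, for a.e.\ $x\in Q$,
\begin{equation*}
\big||f(x)|-(|f|)_{Q}\big|\;\le\;|f(x)-f_{Q}|+\frac{1}{|Q|}\int_{Q}|f(y)-f_{Q}|\,dy\;\le\;|f(x)-f_{Q}|+\frac{\omega(Q)}{|Q|}\,\|f\|_{\mathrm{BMO}^{p'}(\omega)},
\end{equation*}
and when you insert the constant second term into the $\mathrm{BMO}^{p'}(\omega)$ oscillation over $Q$ you pick up the factor
\begin{equation*}
\frac{\omega(Q)}{|Q|}\Big(\frac{1}{\omega(Q)}\int_{Q}\omega(x)^{1-p'}dx\Big)^{1/p'}=\Big(\frac{1}{|Q|}\int_{Q}\omega\Big)^{1/p}\Big(\frac{1}{|Q|}\int_{Q}\omega^{1-p'}\Big)^{1/p'},
\end{equation*}
which is precisely the $A_{p}$ characteristic of the single cube $Q$, raised to the power $1/p$. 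So the bound you obtain on the oscillation of $|f|$ over $Q$ is $\|f\|_{\mathrm{BMO}^{p'}(\omega)}\bigl(1+A_{p}(Q)^{1/p}\bigr)$, and to conclude $|f|\in \mathrm{BMO}^{p'}(|f|)$ you must take a supremum over $Q$ — which requires $\sup_{Q}A_{p}(Q)<\infty$, i.e.\ exactly the conclusion $\omega\in A_{p}$ you are trying to reach. Your assertion that ``the $A_{p}$ constant enters only in the final numerical bound, not in establishing finiteness of the norm'' is therefore false: it enters cube by cube inside the supremum defining the norm. The circularity you correctly flagged is not resolved, and the converse implication $f\in\mathrm{BMO}^{p'}(|f|)\Rightarrow|f|\in A_{p}$ remains unproved. (Note also that the seemingly more direct route, mimicking the necessity half of the proof of Theorem \ref{main1.6}, fails here because it produces $|f_{Q}|$ rather than $(|f|)_{Q}$ in the lower bound, and $f_{Q}$ can vanish by cancellation of signs.)

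There is a second, smaller misstep in the forward direction: you say ``the reverse oscillation bound for $f$ in terms of $|f|$ goes through Proposition \ref{prop3.1} with the now-established $A_{p}$ constant,'' but Proposition \ref{prop3.1} bounds $\||g|\|_{\mathrm{BMO}^{p'}(\omega)}$ by $\|g\|_{\mathrm{BMO}^{p'}(\omega)}$, i.e.\ it goes in the opposite direction to the one you need, and no general inequality $\|g\|\le C\||g|\|$ holds (a function with constant modulus and wildly oscillating sign has $\||g|\|=0$). The forward direction is nevertheless correct, but it should be argued directly, exactly as in the sufficiency half of the proof of Theorem \ref{main1.6}: since $|f-f_{Q}|\le|f|+|f_{Q}|\le\omega+\omega_{Q}$, Minkowski's inequality gives $\|f\|_{\mathrm{BMO}^{p'}(\omega)}\le I_{1}+I_{2}\le 1+[\omega]_{A_{p}}^{1/p}$ with $I_{1},I_{2}$ as in that proof. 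You should replace the appeal to Proposition \ref{prop3.1} there by this computation, and you still need a genuinely new argument (or a counterexample search) for the converse direction.
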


\vskip 0.5cm
\noindent
{\it Proof of Theorem \ref{main1.8}.}
Since the case $p=\infty$ is trivial, it suffices to consider $1\leq p<\infty$. Let $f\in \mathrm{BMO}^{p}(\omega)$. For any cube $Q$, we use Fubini's theorem and  Minkowski's inequality to establish
$$(U_{\psi}f)_{Q}=\int_{0}^{1}\bigg(\frac{1}{|Q|}\int_{Q}f(tx)dx\bigg)\psi(t)dt=\int_{0}^{1}f_{tQ}\psi(t)dt$$
and
\begin{eqnarray*}
&&\bigg(\frac{1}{w(Q)}\int_{Q}|U_{\psi}f(x)-(U_{\psi}f)_{Q}|^{p}\omega(x)^{1-p}dx\bigg)^{1/p}\\
&&\leq\bigg(\frac{1}{w(Q)}\int_{Q}\bigg(\int_{0}^{1}|f(tx)-f_{tQ}|\psi(t)dt\bigg)^{p}\omega(x)^{1-p}dx\bigg)^{1/p}\\
&&\leq\int_{0}^{1}\bigg(\frac{1}{\omega(Q)}\int_{Q}|f(tx)-f_{tQ}|^{p}\omega(x)^{1-p}dx\bigg)^{1/p}\psi(t)dt\\
&&=\int_{0}^{1}\frac{t^{(p-1)\alpha/p-n/p}}{\omega(Q)^{1/p}}\bigg(\int_{tQ}|f(x)-f_{tQ}|^{p}\omega(x)^{1-p}dx\bigg)^{1/p}\psi(t)dt\\
&&\leq\|f\|_{\mathrm{BMO}^{p}(\omega)}\int_{0}^{1}t^{(p-1)\alpha/p-n/p}\cdot\frac{\omega(tQ)^{1/p}}{\omega(Q)^{1/p}}\psi(t)dt\\
&&=\|f\|_{\mathrm{BMO}^{p}(\omega)}\int_{0}^{1}t^{(p-1)\alpha/p-n/p}\cdot
\bigg(\frac{\int_{tQ}\omega(x)dx}{\int_{Q}\omega(x)dx}\bigg)^{1/p}\psi(t)dt\\
&&=\|f\|_{\mathrm{BMO}^{p}(\omega)}\int_{0}^{1}t^{\alpha}\psi(t)dt.
\end{eqnarray*}
Therefore, we have obtained the upper estimate
$$\|U_{\psi}f\|_{\mathrm{BMO}^{p}(\omega)\rightarrow \mathrm{BMO}^{p}(\omega)}\leq \int_{0}^{1}t^{\alpha}\psi(t)dt.$$

To prove the opposite one, we take $f_{0}(x)=\omega(x)$. From Theorem \ref{main1.3} and the condition $\omega(tx)=t^{\alpha}\omega(x)$, it follows that $f_{0}(x)\in \mathrm{BMO}^{p}(\omega)$ and
$$U_{\psi}f_{0}(x)=\int_{0}^{1}f_{0}(tx)\psi(t)dt=f_{0}(x)\int_{0}^{1}t^{\alpha}\psi(t)dt.$$
If $U_{\psi}$ is a bounded operator from $\mathrm{BMO}^{p}(\omega)$ to itself, then $(1.2)$ holds. Meanwhile, if $(1.2)$ holds, the constant $\int_{0}^{1}t^{\alpha}\psi(t)dt$ is best possible. Thus we complete the proof. \qed

\vskip 0.5cm

Given a nonnegative function $\psi$ on $[0,1]$. For a measurable complex valued function $f$ on $\mathbb{R}^{n}$, the weighted Ces$\grave{a}$ro average operator $V_{\psi}$, the adjoint operator of $U_{\psi}$, is defined by
$$(V_{\psi}f)(x)=\int_{0}^{1}f(\frac{x}{t})t^{-n}\psi(t)dt.$$

The following result can be deduced immediately from the proof of Theorem \ref{main1.7}.

\begin{theorem}
Let $\psi: [0,1]\rightarrow [0,\infty)$ be a function, $ p\in [1, \infty]$, $\alpha\in \mathbb{R}$, $\omega\in A_{p'}$ and $\omega(tx)=t^{\alpha}\omega(x)$ for all $t\in (0,\infty)$.
Then,
 $V_{\psi}: \mathrm{BMO}^{p}(\omega)\rightarrow \mathrm{BMO}^{p}(\omega)$ exists as a bounded operator if and only if
$$\int_{0}^{1}t^{-\alpha-n}\psi(t)dt<\infty.\eqno(2.3)$$
Moreover, when (2.3) holds, the operator norm of $V_{\psi}$ on $\mathrm{BMO}^{p}(\omega)$ is given by
$$\|V_{\psi}\|_{\mathrm{BMO}^{p}(\omega)\rightarrow \mathrm{BMO}^{p}(\omega)}=\int_{0}^{1}t^{-\alpha-n}\psi(t)dt.$$
\end{theorem}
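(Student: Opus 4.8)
The plan is to follow the proof of Theorem~\ref{main1.8} closely, replacing $U_\psi$ by its adjoint $V_\psi$ and adjusting the dilation bookkeeping. First I would compute the cube average of $V_\psi f$. Using Fubini's theorem and the substitution $y=x/t$ (so $dx=t^n\,dy$, and $x\in Q$ corresponds to $y\in t^{-1}Q$), one finds
$$(V_\psi f)_Q=\frac{1}{|Q|}\int_Q\int_0^1 f(x/t)t^{-n}\psi(t)\,dt\,dx=\int_0^1 t^{-n}\psi(t)f_{t^{-1}Q}\,dt,$$
the exact analogue of the identity $(U_\psi f)_Q=\int_0^1 f_{tQ}\psi(t)\,dt$ used there.

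For the upper estimate I would subtract this average inside the $\mathrm{BMO}^p(\omega)$ norm and apply Minkowski's integral inequality to pull the $t$-integral outside, reducing matters to controlling, for each fixed $t$, the quantity $\big(\frac{1}{\omega(Q)}\int_Q|f(x/t)-f_{t^{-1}Q}|^p\omega(x)^{1-p}\,dx\big)^{1/p}$. The substitution $y=x/t$ together with the homogeneity $\omega(x)=\omega(t\cdot x/t)=t^\alpha\omega(x/t)$ converts this into an integral over $t^{-1}Q$ against the weight $\omega^{1-p}$, producing a Jacobian factor $t^n$ and a weight factor $t^{\alpha(1-p)}$; bounding it by $\|f\|_{\mathrm{BMO}^p(\omega)}\,\omega(t^{-1}Q)^{1/p}$ and using $\omega(t^{-1}Q)=t^{-\alpha-n}\omega(Q)$ (again from the homogeneity, after the further change of variables $z=ty$) collapses all the powers of $t$ to a single $t^{-\alpha}$. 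Multiplying by $t^{-n}\psi(t)$ and integrating then yields $\|V_\psi f\|_{\mathrm{BMO}^p(\omega)}\le \|f\|_{\mathrm{BMO}^p(\omega)}\int_0^1 t^{-\alpha-n}\psi(t)\,dt$. The main obstacle, though purely computational, is exactly this bookkeeping: the Jacobian, the weight homogeneity, and the ratio $\omega(t^{-1}Q)/\omega(Q)$ must combine to precisely $t^{-\alpha-n}$, and any sign error in the dilation exponent would spoil the sharp constant.

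For sharpness I would test on $f_0=\omega$. Since $\omega\in A_{p'}$ and $(p')'=p$, Theorem~\ref{main1.6} guarantees $\omega\in\mathrm{BMO}^{p}(\omega)$, so $f_0$ is an admissible nonzero test function. Using $\omega(x/t)=t^{-\alpha}\omega(x)$ one computes $V_\psi f_0(x)=\int_0^1 \omega(x/t)t^{-n}\psi(t)\,dt=f_0(x)\int_0^1 t^{-\alpha-n}\psi(t)\,dt$, so $V_\psi$ acts on $f_0$ as multiplication by the constant $\int_0^1 t^{-\alpha-n}\psi(t)\,dt$. Hence boundedness of $V_\psi$ forces $(2.3)$, and the operator norm is at least this constant; combined with the upper estimate this proves both the stated equivalence and the exact value of $\|V_\psi\|_{\mathrm{BMO}^p(\omega)\to\mathrm{BMO}^p(\omega)}$. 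The case $p=\infty$ is handled separately but identically to Theorem~\ref{main1.8}.
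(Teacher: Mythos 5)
Your proposal is correct and is exactly the adaptation the paper intends: the paper gives no separate proof, stating only that the result "can be deduced immediately" from the proof of Theorem~\ref{main1.8}, and your computation of $(V_\psi f)_Q=\int_0^1 t^{-n}\psi(t)f_{t^{-1}Q}\,dt$, the Minkowski/change-of-variables bookkeeping yielding the single factor $t^{-\alpha}$ (hence $t^{-\alpha-n}$ after multiplying by $t^{-n}\psi(t)$), and the test function $f_0=\omega$ with $V_\psi f_0=f_0\int_0^1 t^{-\alpha-n}\psi(t)\,dt$ supply precisely the omitted details. The only (inherited) caveat is that, as in the paper's own proof of Theorem~\ref{main1.8}, the lower bound via $f_0=\omega$ tacitly assumes $\|\omega\|_{\mathrm{BMO}^p(\omega)}\neq 0$, i.e.\ that $\omega$ is nonconstant.
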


\section{Acknowledgement}

I would like to thank the referee for the helpful comment which improved the
presentation of this paper.

\color{black}
\vskip 0.5cm


\begin{thebibliography}{999}

\bibitem{ATE}
Astala, K., Iwaniec,T., Saksman,E.: Beltrami operators in the plane, Duke Math. J. {\bf 107}(2001), 27-56.

\bibitem{B}
Buckley,  S.: Estimates for operator norms on weighted spaces and reverse Jensen inequalities, Trans. Amer. Math.
Soc. {\bf 340}(1993), 253-272.

\bibitem{BPT}
Bui, H., Paluszy\'{n}ski, M., Taibleson, M.: A maximal function characterization of weighted Besov-Lipschitz and
Triebel-Lizorkin spaces, Studia Math. {\bf 119}(1996), 219-246 .

\bibitem{BT}
Bui, H., Taibleson, M.: The characterization of the Triebel-Lizorkin space for $p=\infty$, J. Fourier Anal. Appl. {\bf 6}(2000), 537-550.

\bibitem{CF2}
Carton-Lebrun, C., Fosset, M.: Moyennes et quotients de Taylor dans BMO, Bull.
Soc. Roy. Sci. Li\'{e}ge 53, {\bf 2}(1984), 85-87.

\bibitem{CF}
Coifman, R., Fefferman, C.: Weighted norm inequalities for maximal functions and singular integrals, Studia Math.
{\bf 51}(1974), 241-250.

\bibitem{DGPP}
Dragi\v{c}evi\'c, O., Grafakos, L., Pereyra, C. and Petermichl, S.: Extrapolation and sharp norm estimates for classical operators
on weighted Lebesgue spaces, Publ. Math. {\bf 49}(2005), 73-91.

\bibitem{G}
Garc\'{i}a-Cuerva, J.: Weighted $H^{p}$ spaces, Dissertationes Math. {\bf 162}(1979).

\bibitem{HMW}
Hunt, R., Muckenhoupt, B., Wheeden, R.: Weighted norm inequalities for the conjugate function and Hilbert transform,
Trans. Amer. Math. Soc. {\bf 176}(1973), 227-251.

\bibitem{KS}
Komori, Y., Shirai, S.: Weighted Morrey spaces and a singular integral operator, Math. Nachr. {\bf 282}(2009), 219-231.

\bibitem{LMP}
Lacey, M.T., Moen, K., P\'erez, C., Torres, R.H.: Sharp weighted bounds for fractional integral operators, J. Funct. Anal. {\bf 259}(2010), 1073-1097.

\bibitem{JM}
Martell, J.: Sharp maximal functions associated with approximations of the identity in spaces of homogeneous type and
applications, Studia Math. {\bf 161}(2004), 113-145.

\bibitem{M}
Muckenhoupt, B.: Weighted norm inequalities for the Hardy maximal function,
Trans. Amer. Math. Soc. {\bf 165}(1972), 207-226.

\bibitem{P1}
Petermichl, S.: The sharp bound for the Hilbert transform in weighted Lebesgue spaces in terms of the classical $A_{p}$
characteristic, Amer. J. Math. {\bf 129}(2007), 1355-1375.

\bibitem{P2}
Petermichl, S.: The sharp weighted bound for the Riesz transforms, Proc. Amer. Math. Soc. {\bf 136}(2008), 1237-1249.

\bibitem{P3}
Petermichl, S., Volberg, A.: Heating of the Ahlfors-Beurling operator: weakly quasiregular maps on the plane are
quasiregular, Duke Math. J. {\bf 112}(2002), 281-305.

\bibitem{S}
Stein, E.M.: Harmonic analysis: Real-Variable Methods, Orthogonality, and Oscillatory Integrals, Princeton University Press, (1993).

\bibitem{X}
Xiao, J.: $L^{p}$ and BMO bounds of weighted Hardy-Littlewood averages, J Math Anal Appl. {\bf 262}(2001), 660-666.

\end{thebibliography}
\end{document}